\documentclass[11pt]{article}
\usepackage[margin=1in]{geometry}
\usepackage{amsmath,amssymb,amsthm}
\usepackage{graphicx}
\usepackage{hyperref}
\usepackage{xcolor}
\usepackage{tcolorbox}
\tcbuselibrary{skins,breakable}
\usepackage{longtable}

\newtheorem{theorem}{Theorem}[section]
\newtheorem{lemma}[theorem]{Lemma}
\newtheorem{corollary}[theorem]{Corollary}
\newtheorem{definition}[theorem]{Definition}
\newtheorem{problem}[theorem]{Problem}
\newtheorem{remark}[theorem]{Remark}
\newtheorem{example}[theorem]{Example}

\newtcolorbox{leancert}{colback=green!12,colframe=green!45!black,boxrule=0.6pt,arc=1mm,
  left=2mm,right=2mm,top=1mm,bottom=1mm,fonttitle=\bfseries,title=Lean 4 Certificate}
\title{An Efficient, Lean 4 Verified Necessary and Sufficient Condition for Membership in Pedigree Polytopes}
\author{T.S.\ Arthanari \\
\small Department of Information Systems and Operations Management \\
\small University of Auckland, Auckland, New Zealand \\
\small \texttt{t.arthanari@auckland.ac.nz} \\
\small ORCID: 0000-0002-3345-5664}
\date{}

\newcommand{\NS}{N\&S}
\begin{document}
\maketitle

\begin{abstract}
The set \(P_{n}\) consists of the characteristic vectors of pedigrees, which are combinatorial encodings in one-to-one correspondence with Hamiltonian cycles. For \(n \geq 4\) and \(\tau_{n} = \binom{n}{3}-1\), the Membership Problem for the Pedigree Polytope (M3P) asks whether a given vector \(X \in \mathbb{Q}^{\tau_{n}}\) belongs to \(\mathrm{conv}(P_n)\). This paper gives a complete account, with the full proof statements and machine-verified Lean~4 certificates, of the necessary and sufficient (\NS) condition established in Chapter~5 of \emph{Pedigree Polytopes: New Insights on Computational Complexity of Combinatorial Optimisation Problems} (Arthanari, 2023): membership of $X$ in $\mathrm{conv}(P_n)$ is characterised by the feasibility, at every stage $k=4,\dots,n-1$, of a recursively constructed layered network, together with, when needed, a multicommodity flow problem $\mathrm{MCF}(k-1)$ achieving its maximum possible flow $z_{\max}$. We present the construction, the \NS{} condition itself, and the proofs of this characterisation, together with the resulting polynomial-time algorithm for M3P. Both directions of the \NS{} condition --- sufficiency and necessity (the multicommodity-flow feasibility argument of the book's Chapter~5) --- are machine-verified in Lean~4/Mathlib4 with zero unresolved \texttt{sorry}s in either dependency chain. The \NS{} characterisation therefore follows directly from the two machine-checked implications. The complete Lean~4 project is publicly available. A companion introductory article (arXiv:2507.09069) presents the same material for a broader readership, without the Lean~4 verification; the present paper adds full statements, proof sketches, and the complete machine-checked certificates.

\end{abstract}

\noindent \textbf{MSC 2020:} 90C27, 03D15 \quad \textbf{ACM:} G.2.1; G.2.2; G.2.3

\noindent \textbf{Keywords:} pedigree polytope, membership problem, multicommodity flow, layered network, Lean~4, polynomial-time algorithm

\section{Introduction}\label{sec:intro}

Given a polyhedral convex set $C$ and a point $X$, deciding whether $X \in C$ is called the \emph{membership problem} for $C$. This paper studies the membership problem for a specific family of polytopes --- the \emph{pedigree polytopes} --- arising from a combinatorial encoding of Hamiltonian cycles that underlies the Multistage Insertion (MI) formulation of the Symmetric Travelling Salesman Problem (STSP) \cite{arthanari1983}.

Let $n \geq 3$ and $V_n = \{1,\dots,n\}$. Let $E_n = \{(i,j) \mid i,j \in V_n,\, i<j\}$, with $|E_n| = p_n = n(n-1)/2$. For $k \in [3,n]$, let $\Delta^k = \{\{i,j,k\} \mid 1 \leq i<j<k\}$ be the set of triangles with largest vertex $k$. For $j>3$, the \emph{generators} of a triangle $v=\{i,j,k\}$ are the triangles $u=\{r,i,j\}$ or $u=\{i,s,j\}$; for $j\leq 3$, the unique generator is the base triangle $\{1,2,3\}$, which itself has no generator. The edge $(i,j)$ is called the \emph{common edge} of $v$.

Since the generators of $v=\{i,j,k\}$ depend only on its common edge $(i,j)$ and not on $k$, we may speak of the \emph{generators of the edge} $(i,j)$ for $j>3$, an edge being written as the pair $(i,j)$ with $i<j$: the set of generators is $\{(r,i) \mid r<i\} \cup \{(i,s) \mid i<s<j\}$, the common edges of the triangles $\{r,i,j\}$ and $\{i,s,j\}$ at stage $j$. Likewise, as the triangle at stage $k$ is $\{i_k,j_k,k\}$, a pedigree is determined by its common edges: it is given by the sequence $(i_k,j_k)$, $k \geq 4$, together with the fixed base triangle $\{1,2,3\}$.

\begin{example}\label{ex:generator}
For $v=\{2,5,7\}$ (so $i=2$, $j=5$, $k=7$), since $j=5>3$, the generators are the triangles $\{r,2,5\}$ for $r<2$, namely $\{1,2,5\}$, and $\{2,s,5\}$ for $2<s<5$, namely $\{2,3,5\}$ and $\{2,4,5\}$; the common edge of $v$ is $(2,5)$. In terms of edges, the set of generators of the edge $(2,5)$ is $\{(1,2),(2,3),(2,4)\}$, these being the common edges of $\{1,2,5\}$, $\{2,3,5\}$ and $\{2,4,5\}$ respectively. For $v=\{1,2,7\}$ (so $i=1$, $j=2$, $k=7$), since $j=2\leq3$, the unique generator is the base triangle $\{1,2,3\}$ --- note this holds regardless of the value of $k$: whenever the common edge $(i,j)$ is itself one of the three edges of the base triangle, that base triangle is the generator, no matter which city $k$ is being inserted into it. The common edge of $v$ is $(1,2)$.
\end{example}

\begin{definition}[Pedigree \cite{arthanari2006}]\label{def:pedigree}
A \emph{pedigree} $P$ for $n$ is a sequence of $n-2$ triangles
\[
P = (\{1,2,3\}, \{i_4,j_4,4\}, \dots, \{i_n,j_n,n\})
\]
such that (1) each $\{i_k,j_k,k\}$ has a generator already in $P$, for $k \in [4,n]$, and (2) the common edges $(i_4,j_4),\dots,(i_n,j_n)$ are all distinct.
\end{definition}

\begin{example}\label{ex:pedigree-n7}
For $n=7$, take
\[
P = \big(\{1,2,3\},\ \{2,3,4\},\ \{1,2,5\},\ \{2,4,6\},\ \{4,6,7\}\big).
\]
Both conditions of Definition~\ref{def:pedigree} hold: $\{2,3,4\}$ (common edge $(2,3)$) and $\{1,2,5\}$ (common edge $(1,2)$) each have $j\leq3$, so their generator is the base triangle $\{1,2,3\}$, already present; $\{2,4,6\}$ has $j=4>3$, with possible generators $\{1,2,4\}$ or $\{2,3,4\}$, and $\{2,3,4\}$ is already in $P$; $\{4,6,7\}$ has $j=6>3$, with possible generators $\{1,4,6\}$, $\{2,4,6\}$, $\{3,4,6\}$, or $\{4,5,6\}$, and $\{2,4,6\}$ is already in $P$. The four common edges $(2,3)$, $(1,2)$, $(2,4)$, $(4,6)$ are all distinct.

$P$ encodes the Hamiltonian cycle built by inserting each city, in order, into its common edge, starting from the $3$-tour $1$--$2$--$3$--$1$:
\[
\underbrace{1\text{--}2\text{--}3\text{--}1}_{\text{start}}
\ \xrightarrow{\text{insert }4\text{ into }\{2,3\}}\
1\text{--}2\text{--}4\text{--}3\text{--}1
\ \xrightarrow{\text{insert }5\text{ into }\{1,2\}}\
1\text{--}5\text{--}2\text{--}4\text{--}3\text{--}1
\]
\[
\xrightarrow{\text{insert }6\text{ into }\{2,4\}}\
1\text{--}5\text{--}2\text{--}6\text{--}4\text{--}3\text{--}1
\ \xrightarrow{\text{insert }7\text{ into }\{4,6\}}\
1\text{--}5\text{--}2\text{--}6\text{--}7\text{--}4\text{--}3\text{--}1,
\]
a Hamiltonian cycle on all seven cities. Each triangle $\{i_k,j_k,k\}$ of $P$ records precisely this fact: city $k$ was inserted into edge $(i_k,j_k)$ of the tour built so far. The generator condition ensures the edge being inserted into actually exists at that stage of construction; the distinctness condition ensures no edge is used as an insertion point twice, since each edge of a Hamiltonian cycle can absorb at most one inserted city under this construction.

Conversely, the pedigree can be read back from the cycle by shrinking the cycle at the cities $7,6,5,4$ in turn, each time joining the two neighbours of the shrunk city:
\[
1\text{--}5\text{--}2\text{--}6\text{--}7\text{--}4\text{--}3\text{--}1
\ \xrightarrow{\text{shrink }7}\
1\text{--}5\text{--}2\text{--}6\text{--}4\text{--}3\text{--}1
\ \xrightarrow{\text{shrink }6}\
1\text{--}5\text{--}2\text{--}4\text{--}3\text{--}1
\]
\[
\xrightarrow{\text{shrink }5}\
1\text{--}2\text{--}4\text{--}3\text{--}1
\ \xrightarrow{\text{shrink }4}\
1\text{--}2\text{--}3\text{--}1.
\]
When city $k$ is shrunk, its two neighbours in the current cycle are exactly the endpoints $i_k,j_k$ of the edge it was inserted into: shrinking $7$ restores the edge $\{4,6\}$, shrinking $6$ restores $\{2,4\}$, shrinking $5$ restores $\{1,2\}$, and shrinking $4$ restores $\{2,3\}$. Thus the insertions and the shrinks undo one another, and the pedigree and the Hamiltonian cycle determine each other. Pedigrees are in one-to-one correspondence with Hamiltonian cycles in $K_n$ \cite{arthanari2006}.
\end{example}

We write $P_n$ for the set of characteristic vectors of pedigrees for $n$ (one coordinate per triangle, omitting $\{1,2,3\}$, so that $P_n \subset \mathbb{B}^{\tau_n}$ with $\tau_n = \binom{n}{3}-1$), and $\mathrm{conv}(P_n)$ for the \emph{pedigree polytope}. The dimension of $\mathrm{conv}(P_n)$ is $\tau_n - (n-3)$ \cite{arthanari2005,arthanari2006}.

\begin{problem}[M3P]\label{prob:m3p}
Input: $n \geq 4$, $X \in \mathbb{Q}^{\tau_n}$. Question: is $X \in \mathrm{conv}(P_n)$?
\end{problem}

\noindent\textbf{Where M3P sits, from the top down.} It is well known that $P=NP$ if some NP-complete problem can be solved in polynomial time \cite{cook1971,karp1972}. Hamiltonicity is NP-complete, and the Hamiltonicity decision problem is a particular instance of the STSP: given a graph $G$ on $n$ vertices, set $c_{ij}=0$ if $(i,j)$ is an edge of $G$ and $c_{ij}=1$ otherwise; then $G$ is Hamiltonian if and only if the optimal tour has cost $0$. The MI formulation of the STSP is, in turn, a particular Pedigree optimisation problem (made precise below), so Pedigree optimisation is a larger class of problems that includes the STSP. By the classical equivalence of optimisation and separation for well-behaved polytopes \cite{GLS1988}, and a construction of Maurras \cite{maurras2002} that derives separation directly from a membership oracle, all of this class can be solved efficiently if M3P can. So the grand result, $P=NP$, rests on a single polyhedral fact: M3P is in P. This paper verifies, using Lean~4, the proof of a necessary and sufficient condition for membership in the pedigree polytope.

\emph{Pedigree optimisation} is the problem of finding, for a given linear objective $\sum_{ijk} c_{ijk} x_{ijk}$, its optimum over $\mathrm{conv}(P_n)$. Since pedigrees are in one-to-one correspondence with Hamiltonian cycles in $K_n$, a particular choice of objective ties it to the STSP: taking $\mathcal{C}_{ijk} = c_{ik}+c_{jk}-c_{ij}$, the incremental cost of inserting city $k$ into edge $(i,j)$, the value of $\sum_{ijk} \mathcal{C}_{ijk} x_{ijk}$ at a pedigree's characteristic vector equals the cost of the corresponding tour (but for the cost of the initial 3-tour, namely $c_{12}+c_{13}+c_{23}$). So optimising this specific MI-objective over $\mathrm{conv}(P_n)$ solves the STSP exactly \cite{arthanari1983,arthanari2000}. Establishing that M3P is efficiently solvable is therefore the central mathematical burden of proof, standing between the pedigree-polytope framework and a polynomial algorithm for the STSP (apart from checking the conditions for a well-behaved polytope, defined by Maurras).

This paper's sole purpose is to give a complete account of the resolution of Problem~\ref{prob:m3p} --- all statements, proof sketches, and machine-checked Lean~4 certificates --- for a necessary and sufficient (\NS) condition for $X \in \mathrm{conv}(P_n)$, checkable in polynomial time. The material is drawn from Chapter~5 of \cite{arthanari2023book}, with some new proofs and simplifications, as appearing in the arXiv paper of 2025 \cite{arthanari2025membership}, and with both the sufficiency and necessity directions machine-verified in Lean~4. That arXiv paper is a companion introduction presenting the same Chapter~5 material for a broader readership, without the Lean~4 verification; the present paper adds proof sketches and the full machine-checked certificates, though the detailed, line-by-line proofs themselves remain in \cite{arthanari2023book} and \cite{arthanari2025membership} rather than being reproduced here. The remaining links of the chain above, from $\mathrm{M3P} \in \mathrm{P}$ to $P = NP$, are the subject of the companion paper \cite{arthanari2026pnp}, and its Lean~4 development, with certain folklore results in combinatorial optimisation and computational complexity, is available on GitHub. In this paper, we restrict ourselves to Lean~4 verification of the proof of the necessary and sufficient condition for $X \in \mathrm{conv}(P_n)$, namely that $\mathrm{MCF}(n-1)$ achieves its maximum possible flow $z_{\max}$. Keeping the scope to the essential burden of proof lets the central polyhedral result stand on its own, open to scrutiny. 

\subsection*{The story so far}

Membership $X \in \mathrm{conv}(P_n)$ means that $X$ is a convex combination of pedigrees, $X=\sum_P \lambda_P X_P$ with weights $\lambda_P>0$ summing to $1$. 
At each stage $k$, we solve a Forbidden Arc Transportation (FAT) problem $F_k$; its infeasibility already certifies non-membership (Theorem~\ref{thm:nonmembership}). 
The nodes of layer $k$ carry capacities, and the flows between consecutive layers show how this weight is passed on from one layer to the next. Some pedigrees, called \emph{rigid}, have their weight $\mu$ fixed among all feasible flows in the network; these form the set $R_k$.  Under the condition stated here, we build a \emph{layered network} $(N_k, R_k, \mu)$ recursively, stage by stage, where $N_k$ is the network at stage $k$, $R_k$ is the set of rigid pedigrees found at stage $k$, and $\mu$ gives their weights, extracting the weight information as a by-product of the flow computations themselves.  Feasibility of $F_k$ is necessary for membership, and is sometimes enough on its own (for instance, when $N_k$ is empty), but in general it is not. When feasibility of $F_k$ is not enough, we solve a multicommodity flow problem $\mathrm{MCF}(k)$, and $X/(k+1) \in \mathrm{conv}(P_{k+1})$ holds exactly when $\mathrm{MCF}(k)$ achieves its maximum possible flow $z_{\max}$. The characterisation at the heart of this paper (Theorem~\ref{thm:main-ns}) is that $X \in \mathrm{conv}(P_n)$ if and only if $\mathrm{MCF}(n-1)$ achieves its maximum possible flow $z_{\max}$.

\subsection*{Guide to coloured boxes}

Each theorem, lemma, or corollary whose proof has been formalised carries a coloured annotation:

\begin{leancert}
The result is fully formalised and machine-verified in Lean~4, with zero \texttt{sorry}s.
\end{leancert}

We follow standard notation: $\mathbb{R}, \mathbb{Q}, \mathbb{Z}, \mathbb{N}$ denote the reals, rationals, integers, and natural numbers; $\mathbb{B}=\{0,1\}$; the subscript $+$ denotes nonnegativity.

\subsection*{Organisation}

Section~\ref{sec:preliminaries} gives preliminaries: the MI-relaxation, the FAT problem, and rigid arcs. Section~\ref{sec:layered-network} constructs the layered network and proves the sufficient condition for non-membership. Section~\ref{sec:mcf} defines $\mathrm{MCF}(k)$ and proves the \NS{} condition for membership. Section~\ref{sec:complexity} establishes the polynomial complexity bound. Section~\ref{sec:lean4} presents the Lean~4 machine verification. Section~\ref{sec:conclusion} concludes with the future research possibilities, as outlined in the companion paper on the $P=NP$ consequence \cite{arthanari2026pnp}. The main result of this paper is that checking this \NS{} condition can be done in polynomial time. We state all the necessary results and verify their proofs in Lean~4; the polynomial-time analysis relies on the cardinality bound proved in Section~\ref{sec:complexity}, whose formalisation is in the Lean~4 project of the companion paper \cite{arthanari2026pnp}, and on the folklore result that linear programs are solvable efficiently \cite{khachiyan1979}. Thus, the membership problem M3P is in P; with Lean~4 verification of the polyhedral argument, this is the major contribution of this paper. 
\section{Preliminaries and Notation}\label{sec:preliminaries}

\subsection{The MI-Relaxation and \texorpdfstring{$P_{MI}(n)$}{P\_MI(n)}}

For each triangle $u=\{i,j,k\}\in\Delta^k$, let $x_u \in \{0,1\}$ indicate whether $u \in P$ for a pedigree $P$. Since exactly one triangle is chosen at each stage $k$, and each common edge is used at most once,
\begin{align}
&\sum_{u \in \Delta^k} x_u = 1, && k \in [3,n], \label{eq:layer}\\
&\sum_{k \in [4,n]} x_{ijk} \leq 1, && (i,j) \in E_3, \label{eq:e3slack}\\
&-\sum_{u \in \Delta^j,\, i \in u} x_u + \sum_{k \in [j+1,n]} x_{ijk} \leq 0, && (i,j) \in E_{n-1}\setminus E_3. \label{eq:generatorslack}
\end{align}
Together with $x_u \in \{0,1\}$, these constraints describe the vectors $x_u$ of the pedigrees in $P_n$. The \emph{Pedigree Optimisation Problem} is to optimise a given linear objective $\sum_{u} c_u x_u$ over this set (equivalently, over $\mathrm{conv}(P_n)$). Choosing the specific objective coefficients $\mathcal{C}_{ijk}=c_{ik}+c_{jk}-c_{ij}$, where $c_{ij}$ is the cost of edge $\{i,j\}$ in the STSP, turns it into the MI-formulation of the STSP \cite{arthanari1983}. Relaxing the integrality constraint to $x_u \geq 0$ gives the \emph{MI-relaxation} $\mathrm{MIR}(n)$, with feasible set $P_{MI}(n)$. All $X \in P_n$ are extreme points of $P_{MI}(n)$, but $P_{MI}(n)$ has fractional extreme points as well; hence $\mathrm{conv}(P_n) \subset P_{MI}(n)$ \cite{arthanari2006}. It therefore suffices to check membership only for $X \in P_{MI}(n)$.

\begin{lemma}[Pedigree Truncation]\label{lem:truncation}
If $P \in \mathrm{Pedigree}(n)$ and $3 \leq k \leq n$, then the restriction of $P$ to its first $k-2$ triangles, $P/k$, is a member of $\mathrm{Pedigree}(k)$.
\end{lemma}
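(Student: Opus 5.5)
We verify the two conditions of Definition~\ref{def:pedigree} directly for the truncated sequence. Write $P = (\{1,2,3\}, \{i_4,j_4,4\}, \dots, \{i_n,j_n,n\})$ and, for $3 \le k \le n$, let
\[
P/k = (\{1,2,3\}, \{i_4,j_4,4\}, \dots, \{i_k,j_k,k\}),
\]
a sequence of $k-2$ triangles. The $l$-th entry of $P/k$ lies in $\Delta^{l+2}$ and is the same as in $P$, so $P/k$ has the shape required of a pedigree for $k$. When $k=3$ the sequence is just the base triangle, and both conditions hold vacuously. We may therefore assume $k \ge 4$.

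\emph{Condition (1).} Fix $m \in [4,k]$. Since $P$ is a pedigree, the triangle $\{i_m,j_m,m\}$ has a generator $u$ that already appears in $P$. We must show that $u$ in fact appears in $P/k$, i.e.\ among the first $m-3$ triangles. This uses the fact that generators lie at strictly earlier stages.
\begin{itemize}
\item If $j_m \le 3$, then $u = \{1,2,3\}$, which is the first entry of $P/k$.
\item If $j_m > 3$, then $u = \{r,i_m,j_m\}$ or $u = \{i_m,s,j_m\}$. In either case the largest vertex of $u$ is $j_m < m$, so $u \in \Delta^{j_m}$. The only triangle of $P$ in $\Delta^{j_m}$ is the one at stage $j_m$. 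Hence $u = \{i_{j_m}, j_{j_m}, j_m\}$, which occurs in $P/k$ because $j_m < m \le k$.
\end{itemize}

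\emph{Condition (2).} The common edges $(i_4,j_4),\dots,(i_k,j_k)$ form a subfamily of the pairwise distinct common edges $(i_4,j_4),\dots,(i_n,j_n)$ of $P$. They are therefore pairwise distinct.

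The only step that needs care is the phrase ``already in $P$'' in condition (1). Read as ``appearing earlier in the sequence,'' the truncation preserves it trivially. Read as ``anywhere in $P$,'' one needs the stage argument above: every triangle of $P$ in $\Delta^{j}$ sits at position $j$, and $j_m < m$. In the formal setting this is the main obstacle. It amounts to an index bookkeeping lemma saying that the $l$-th entry of $P$ lies in $\Delta^{l+2}$, combined with the bound $j_m < m$.
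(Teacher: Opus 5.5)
Your proof is correct and follows the same route as the paper, whose Lean certificate defines $P/k$ as \texttt{P.triangles.take (k - 2)} and proves each defining field (length, first triangle, layers, generators, distinctness) from the corresponding field of $P$, as you do. One correction to your closing remark: in the paper's formal structure the field \texttt{h\_generators} already supplies an earlier witness index $j<i$, so truncation preserves it with the same witness, and your stage argument is valid for the ``anywhere in $P$'' reading but is not the formal obstacle you anticipate.
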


\begin{leancert}
\texttt{N\_PedigreeDefinition.lean} --- the exact Lean~4 statement, verbatim (Lean's own \texttt{->}, \texttt{forall}, \texttt{N} stand for $\to$, $\forall$, $\mathbb{N}$; this is the one entry in the paper shown in full, to let the reader directly compare the formal statement against Definition~\ref{def:pedigree} --- subsequent certificates give the equivalent mathematical statement in prose for brevity, but the cited Lean~4 file always contains the exact statement of the theorem implemented and verified):
\begin{verbatim}
structure Pedigree (n : N) where
  triangles    : List Triple
  h_n          : 3 <= n
  h_length     : triangles.length = n - 2
  h_first      : triangles.head? = some (1, 2, 3)
  h_layers     : forall i, forall hi : i < triangles.length,
                   (triangles.get <i, hi>).k = i + 3
  h_generators : forall i, i > 0 -> forall hi : i < triangles.length,
                   exists j, exists hj : j < i,
                     (triangles.get <j, Nat.lt_trans hj hi>) in
                       generators (triangles.get <i, hi>)
  h_distinct   : forall i j,
                   forall hi : i < triangles.length,
                   forall hj : j < triangles.length,
                   i > 0 -> j > 0 -> i /= j ->
                   ((triangles.get <i,hi>).i, (triangles.get <i,hi>).j) /=
                   ((triangles.get <j,hj>).i, (triangles.get <j,hj>).j)
  h_in_delta   : forall i, forall hi : i < triangles.length,
                   (triangles.get <i, hi>) in Delta (triangles.get <i, hi>).k

def Pedigree.truncate {n : N} (P : Pedigree n)
    (k : N) (h : 3 <= k /\ k <= n) : Pedigree k where
  triangles := P.triangles.take (k - 2)
  -- h_n, h_length, h_first, h_layers, h_generators, h_distinct,
  -- h_in_delta: each proved in full from the corresponding field
  -- of P (book, Chapter 3).
\end{verbatim}
\end{leancert}

\begin{quote}
\noindent\textbf{Observation (Stem Property).} Given $X \in P_{MI}(n)$, write $X/k = (x_4,\dots,x_k)$ for the restriction of $X$ to layers up to $k$. Then $X/k \in P_{MI}(k)$ for all $k \in [4,n]$. This is immediate from the defining system \eqref{eq:layer}--\eqref{eq:generatorslack}: constraint \eqref{eq:layer} is layer-local, so it holds at each $k' \leq k$ independently of layers beyond $k$; constraints \eqref{eq:e3slack} and \eqref{eq:generatorslack} each bound a sum over $k \in [4,n]$ by a fixed quantity, and since $x \geq 0$, truncating that sum to $k \in [4,k]$ can only decrease it, so an already-satisfied inequality remains satisfied. The statement holds for every point of the LP relaxation $P_{MI}(n)$, not only those lying in $\mathrm{conv}(P_n)$. We record it here for completeness; it is not used elsewhere in this paper.
\end{quote}

\begin{lemma}[Lemma~4 of \cite{arthanari2025membership}]\label{lem:integer-slack}
Every integer solution $X$ to $\mathrm{MIR}(n)$ has slack vector $u \in \mathbb{B}^{p_n}$ (for the inequalities \eqref{eq:e3slack}--\eqref{eq:generatorslack}, extended to all of $E_{n}$) equal to the edge-tour incidence vector of the corresponding $n$-tour.
\end{lemma}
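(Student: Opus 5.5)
We argue by induction on $n$, tracking the tour built by the pedigree alongside the slack vector. First we make the extension of the slacks to all of $E_n$ explicit. For $(i,j)\in E_3$, set $u_{ij}=1-\sum_{k\in[4,n]}x_{ijk}$. For $(i,j)\in E_{n-1}\setminus E_3$, set $u_{ij}=\sum_{w\in\Delta^j,\,i\in w}x_w-\sum_{k\in[j+1,n]}x_{ijk}$. For edges $(i,n)$ with $i<n$, no later insertion can use them, so the natural extension is $u_{in}=\sum_{w\in\Delta^n,\,i\in w}x_w$.

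Now let $X$ be an integer solution. Then $X$ is the characteristic vector of a pedigree $P=(\{1,2,3\},\{i_4,j_4,4\},\dots,\{i_n,j_n,n\})$, and the integrality of $u$ is immediate. The claim to prove is that $u_e=1$ exactly when $e$ is an edge of the Hamiltonian cycle $H_n$ obtained by the insertions of Example~\ref{ex:pedigree-n7}.

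\textbf{Induction.} The base case $n=3$ is trivial: no $x$-variables appear, and $u_{12}=u_{13}=u_{23}=1$, which is the incidence vector of the $3$-tour.

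For the step, assume the claim for the truncation $P/(n-1)$; this is a pedigree by Lemma~\ref{lem:truncation}, and its tour is $H_{n-1}$. Write $u^{(n-1)}$ for the slack vector computed with $n-1$ in place of $n$. Passing from $n-1$ to $n$ changes the slacks in only two ways.
\begin{enumerate}
\item The term $x_{i_nj_nn}=1$ enters exactly one slack, that of $(i_n,j_n)$. Hence $u_{i_nj_n}=u^{(n-1)}_{i_nj_n}-1$, and every other edge of $E_{n-1}$ keeps its slack.
\item The new edges $(i,n)$ acquire $u_{in}=1$ exactly for $i\in\{i_n,j_n\}$.
\end{enumerate}
The tour changes in exactly the same way: $H_n$ is $H_{n-1}$ with the edge $(i_n,j_n)$ replaced by the two edges $(i_n,n)$ and $(j_n,n)$. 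So the two incidence vectors agree, provided $u^{(n-1)}_{i_nj_n}=1$. Nonnegativity of the slacks, which follows from feasibility for $\mathrm{MIR}(n)$, only gives $u^{(n-1)}_{i_nj_n}\ge 1$. The induction hypothesis gives $u^{(n-1)}\in\mathbb{B}$, so this value is exactly $1$. Equivalently, $(i_n,j_n)$ is an edge of $H_{n-1}$, which is precisely the content of the generator and distinctness conditions.

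\textbf{Main obstacle.} The main work is bookkeeping: the induction must be run on the \emph{truncated} slack system. For $e=(i,j)$ with $j\le n-1$, the slack for $n-1$ sums $x_{ijk}$ only over $k\le n-1$, so exactly one additional term, $k=n$, appears when passing to $n$. In addition, the edges $(i,n-1)$ change role: at stage $n-1$ their slack is the "new edge" expression $\sum_{w\in\Delta^{n-1},\,i\in w}x_w$, and at stage $n$ it becomes the generator-slack expression with the subtracted term $x_{i,n-1,n}$. The two expressions are consistent, so the update in step 1 covers this case too, but this has to be checked explicitly.
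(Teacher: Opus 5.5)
Your proof is correct, and its scaffolding matches the paper's. Both track the slack-one edges stage by stage, with the update $U_k=(U_{k-1}\setminus\{(i_k,j_k)\})\cup\{(i_k,k),(j_k,k)\}$ mirroring the insertion of city $k$. Both also reduce the lemma to showing that the consumed edge $(i_k,j_k)$ is still available at stage $k-1$.

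Where you differ is in how that crux is settled. The paper argues combinatorially from Definition~\ref{def:pedigree}:
\begin{itemize}
\item the generator condition shows $(i_k,j_k)$ was created earlier, either as a base edge if $j_k\le 3$ or by the triangle at layer $j_k$;
\item distinctness of common edges shows no earlier triangle has already consumed it.
\end{itemize}
You never use these two conditions. Instead you read availability off the LP itself. The inequality \eqref{eq:e3slack} or \eqref{eq:generatorslack} for $(i_n,j_n)$ at stage $n$ forces $u^{(n-1)}_{i_nj_n}\ge 1$, and binariness from the induction hypothesis pins this to exactly $1$.

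Your route is shorter and needs only \eqref{eq:layer}--\eqref{eq:generatorslack} plus integrality. If you invoke the Stem Property in place of Lemma~\ref{lem:truncation}, the argument no longer needs to know in advance that $X$ is a pedigree. It then shows, as a by-product, that every integer point of $\mathrm{MIR}(n)$ encodes a legitimate insertion sequence.

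The paper's route buys something different: an explicit equivalence between the generator/distinctness axioms and availability in $U_{k-1}$, formalised as \texttt{generator\_iff\_available}. That equivalence links the slack picture to the extendability of a pedigree by a new edge, which is how the paper uses this lemma in the packability argument.
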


\begin{proof}[Sketch]
Work directly with the pedigree $P$ corresponding to $X$ (Definition~\ref{def:pedigree}), and track, at each stage $k=3,\dots,n$, the set $U_k \subseteq E_n$ of edges with slack $u_{ij}=1$: initially $U_3 = \{(1,2),(1,3),(2,3)\}$, and at each subsequent stage, inserting city $k$ into edge $(i,j)$ updates $U_{k-1}$ to $U_k := (U_{k-1}\setminus\{(i,j)\}) \cup \{(i,k),(j,k)\}$ --- directly encoding that edge $(i,j)$ is consumed while its two replacements become newly available. The claim reduces to: for the triangle $\{i,j,k\}$ actually present in $P$, edge $(i,j)$ is genuinely available in $U_{k-1}$ at the moment it is consumed. This follows from two structural facts of Definition~\ref{def:pedigree}: the generator condition guarantees $(i,j)$ was created at some earlier stage (either as a base edge, if $j \leq 3$, or by the triangle at layer $j$ itself, if $j>3$); and the distinct-common-edges condition guarantees no other triangle in $P$ can have already consumed $(i,j)$ before stage $k$, since any such triangle would share $\{i,j,k\}$'s own common edge. Together these show $(i,j)$ persists in $U_m$ for every stage $m$ from its creation up to $k-1$, giving $u_{ij}=1$ exactly for the edges of the tour, and $u_{ij}=0$ for every edge already consumed or not created.
\end{proof}

\begin{leancert}
\texttt{N\_Lemma2\_2.lean} \textbar\ \texttt{theorem generator\_iff\_available}: for $t=(i,j,k)\in P.\mathrm{triangles}$ with $k\geq4$, $t$ has a generator already present in $P$ if and only if $(i,j)$ is available in $U_{k-1}$ --- fully machine-verified, zero \texttt{sorry}s, zero axioms (book, Chapter~3). The same file also verifies directly, for $X$ the integer vector of a pedigree $P$ and $U := U_n$, that the pair $(X,U)$ satisfies all three constraints of the $\mathrm{MIR}(n)$ formulation itself, closing the link between this combinatorial construction and the algebraically-defined slack vector $u$ above.
\end{leancert}

Lemma~\ref{lem:integer-slack} is the technical engine behind a repeatedly used packability argument: if $X/k \in \mathrm{conv}(P_k)$ and city $k+1$ is forced into a particular edge $e'$ (in the sense that $x_{k+1}(e')=1$), then every pedigree active in the convex combination for $X/k$ can be extended by $e'$, and the resulting combination witnesses $X/(k+1) \in \mathrm{conv}(P_{k+1})$.

\begin{theorem}[Packability Corollary; Theorem~1 of \cite{arthanari2025membership}]\label{thm:packability}
If $X \in P_{MI}(n)$, $X/k \in \mathrm{conv}(P_k)$, and $x_{k+1}(e')=1$ for some edge $e'$, then $X/(k+1) \in \mathrm{conv}(P_{k+1})$.
\end{theorem}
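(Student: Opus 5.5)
We take a convex decomposition of $X/k$, append the forced triangle to every pedigree in it, and check that each extended sequence is still a pedigree. Write $X/k=\sum_{P}\lambda_P X_P$ with $\lambda_P>0$, $\sum_P\lambda_P=1$, and each $P\in\mathrm{Pedigree}(k)$. Let $e'=(i',j')$ and $v'=\{i',j',k+1\}$. For each active $P$, define $P'=(P,v')$. If every $P'$ is a pedigree for $k+1$, then
\[
\sum_P\lambda_P X_{P'}=(X/k,\,\chi_{v'})=X/(k+1).
\]
The second equality holds because the layer constraint \eqref{eq:layer} at stage $k+1$, combined with $x_{k+1}(e')=1$ and $X\ge 0$, forces every other coordinate of layer $k+1$ to be $0$. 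This would give $X/(k+1)\in\mathrm{conv}(P_{k+1})$.

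To see that $P'$ is a pedigree, we use Lemma~\ref{lem:integer-slack} in the form certified by \texttt{generator\_iff\_available}. Appending $v'$ to $P$ yields a pedigree exactly when $e'$ lies in $U_k(P)$, the set of edges with slack $1$ for $P$. Those are precisely the edges of the $k$-tour of $P$. So the whole task reduces to showing that $e'$ is an edge of the tour of \emph{every} active $P$.

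For that we use an averaging argument over the slack vectors. Let $u(X/k)$ be the slack of \eqref{eq:e3slack}--\eqref{eq:generatorslack} at stage $k$, extended to $E_k$. Slack is affine in $x$, so $u(X/k)=\sum_P\lambda_P u(X_P)$, and by Lemma~\ref{lem:integer-slack} each $u(X_P)\in\mathbb{B}^{p_k}$ is a tour incidence vector. Now look at the constraint for $e'$ in $X\in P_{MI}(n)$. It bounds $\sum_{m\ge k+1}x_{i'j'm}$ by the amount of $e'$ created up to stage $k$ minus the amount consumed up to stage $k$, and that quantity is exactly $u_{e'}(X/k)$. Since $x_{i'j'(k+1)}=1$ and $X\ge0$, we get $u_{e'}(X/k)\ge 1$. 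Each term satisfies $u_{e'}(X_P)\le 1$ and the $\lambda_P$ are positive and sum to $1$, so $u_{e'}(X_P)=1$ for every active $P$. Hence $e'$ is in every tour, and every $P'$ is a valid pedigree.

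The main obstacle is the identification in the previous paragraph. We have to verify carefully that the right-hand side of constraint \eqref{eq:generatorslack}, or of \eqref{eq:e3slack} when $e'\in E_3$, read at $X$ and restricted to layers $\le k$, coincides with the stage-$k$ slack $u_{e'}(X/k)$ to which Lemma~\ref{lem:integer-slack} applies. The case $j'=k$ must be included: there the edge is created at layer $k$ and nothing at layer $\le k$ consumes it yet. I would settle this by writing $u_{e'}(X/k)$ explicitly as (creation term) $-\sum_{m\le k}x_{i'j'm}$ and handling the cases $e'\in E_3$ and $e'\notin E_3$ separately. After that, the distinctness and generator conditions for $P'$ follow directly from availability, through \texttt{generator\_iff\_available}.
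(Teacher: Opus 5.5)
Your proof is correct. Its skeleton matches the paper's: append $v'=\{i',j',k+1\}$ to every active pedigree, and use the layer-$(k+1)$ equation with $x_{k+1}(e')=1$ and $X\ge 0$ to see that layer $k+1$ of $X$ is $\chi_{v'}$.

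The difference lies at the step that carries the content. The paper's sketch only says ``provided $e'$ has a generator already present in each $X^r$ and is not already one of its common edges''. Its Lean statement takes these as explicit hypotheses (\texttt{h\_gen}, \texttt{h\_ne}). So, as written, it proves the corollary only under that proviso.

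Your averaging argument discharges the proviso from $X\in P_{MI}(n)$ alone. Slack is affine, so $u_{e'}(X/k)=\sum_P\lambda_P u_{e'}(X_P)$, and each $u_{e'}(X_P)\le 1$. The MI inequality for $e'$ gives $u_{e'}(X/k)\ge 1$, which forces $u_{e'}(X_P)=1$ for every active $P$.

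The step you flag as the main obstacle is routine. Since $j'\le k\le n-1$, the inequality for $e'$ is in the system, and it says (creation term) $-\sum_{m\le k}x_{i'j'm}\ge\sum_{m\ge k+1}x_{i'j'm}\ge x_{i'j'(k+1)}=1$. The creation term is $1$ for $e'\in E_3$ and $\sum_{u\in\Delta^{j'},\,i'\in u}x_u$ otherwise; for $j'=k$ the consumption sum up to $k$ is empty.

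You can also lighten the last step in two ways. First, $u_{e'}(X_P)\le 1$ needs only that each layer of $P$ has one triangle, not the full tour identification of Lemma~\ref{lem:integer-slack}. Second, $u_{e'}(X_P)=1$ says directly that the creation term is $1$ and the consumption is $0$. Creation term $1$ means $e'\in E_3$ or the layer-$j'$ triangle of $P$ contains $i'$, i.e.\ is a generator of $e'$. Zero consumption means no common edge of $P$ is $e'$. This avoids applying \texttt{generator\_iff\_available} to $P'$ before $P'$ is known to be a pedigree.

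What your route buys is the statement exactly as displayed. That is also the form used in the proof of Lemma~\ref{lem:y-in-conv}, where only $P_{MI}(l)$-membership of $\tfrac1{v^s}(Y^s/l)$ is available. The paper's route buys a shorter, modular lemma, but its side conditions must then be checked wherever it is applied.
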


\begin{proof}[Sketch]
Fix a convex witness for $X/k$: weights $\lambda_r > 0$, $r \in I(\lambda)$, summing to $1$, with $X/k = \sum_r \lambda_r X^r$ and each $X^r \in P_k$. Since $x_{k+1}(e')=1$ already saturates the layer-$(k+1)$ sum $\sum_{t \in \Delta_{k+1}} x_{k+1}(t) = 1$, every other triangle $t \neq e'$ at layer $k+1$ has $x_{k+1}(t)=0$ (constraint~(1) together with $X \geq 0$). Provided $e'$ has a generator already present in each $X^r$ and is not already one of its common edges, extend every active pedigree by $e'$: $Y^r := X^r \cup \{e'\}$, a genuine pedigree in $P_{k+1}$. Set $Y/(k+1) := \sum_r \lambda_r Y^r$. For any triangle $t$ with $t.k \leq k$, extension leaves membership in each $Y^r$ unchanged, so this layer of $Y/(k+1)$ agrees with $X/k$ directly. At layer $k+1$: every $Y^r$ contains $e'$ by construction, so the corresponding coordinate of $Y/(k+1)$ sums the full weight $\sum_r \lambda_r = 1 = x_{k+1}(e')$; for any other triangle $t \neq e'$ at layer $k+1$, no $Y^r$ contains $t$ (extension adds only $e'$), so that coordinate of $Y/(k+1)$ is $0$, matching $x_{k+1}(t)=0$ shown above. Hence $Y/(k+1) = X/(k+1)$, exhibiting it as a convex combination of pedigrees in $P_{k+1}$.
\end{proof}

\begin{leancert}
\texttt{N\_PackabilityCorollary.lean} \textbar\ \texttt{theorem packability\_corollary}: new proof, Chapter~5, Lean~4 verified. The Lean statement takes the generator/common-edge conditions on $e'$ as explicit hypotheses (\texttt{h\_gen}, \texttt{h\_ne}) rather than deriving them from $x_{k+1}(e')=1$ inline; the layer-$(k+1)$ vanishing argument above is itself a separately verified lemma, \texttt{layer\_complement\_zero}.
\end{leancert}

This corollary --- with a new Lean-4-verified proof, in contrast to the original book proof --- is used repeatedly in Section~\ref{sec:mcf} to lift membership one layer at a time once the relevant flow has been shown feasible.

\subsection{Forbidden Arc Transportation (FAT) Problems}

\begin{definition}[FAT Problem]\label{def:fat}
A \emph{Forbidden Arcs Transportation} (FAT) problem is a balanced transportation problem with some arcs forbidden. Given origins $O=\{O_\alpha\}$ with supplies $a_\alpha \geq 0$, destinations $D=\{D_\beta\}$ with requirements $b_\beta \geq 0$ such that $\sum_\alpha a_\alpha = \sum_\beta b_\beta$, and permitted arcs $A \subseteq O \times D$, the FAT problem seeks a nonnegative flow $f$ satisfying all supply and demand constraints along arcs in $A$. Without loss of generality, we may assume that the total supply is $1$, that is, $\sum_\alpha a_\alpha = \sum_\beta b_\beta = 1$, since dividing all supplies and requirements by their common (positive) total multiplies every feasible flow by the same factor.
\end{definition}

\begin{definition}[Rigid and Dummy Arcs]\label{def:rigid}
Given a FAT problem with a feasible solution $f$, an arc $(\alpha,\beta) \in A$ is \emph{rigid} if $f_{\alpha\beta}$ takes the same value in every feasible solution. A rigid arc with zero frozen flow is a \emph{dummy} arc.
\end{definition}

The set of rigid arcs $\mathcal{R}$ can be found in linear time $O(|G_f|)$, where $G_f$ is a mixed graph built from any one feasible flow $f$, by the Frozen Flow Finding (FFF) algorithm of Gusfield \cite{gusfield1988}: an arc is rigid exactly when it lies in no flow-change cycle of $G_f$, equivalently, when it is an interface between strongly connected components of $G_f$ or a bridge within one. Full definitions and the correctness argument are recalled from Gusfield's original paper; we do not re-derive them here.

\begin{lemma}[Flow Partition]\label{lem:flow-partition}
Let $D \neq \emptyset$ and let $g : D \to \mathbb{R}_{\geq 0}$ with $g(D)=1$ (a probability distribution on $D$), and let $D^1, D^2$ be two partitions of $D$. The FAT problem with origins $D^1_\alpha$ (supply $g(D^1_\alpha)$), destinations $D^2_\beta$ (demand $g(D^2_\beta)$), and arcs $A=\{(\alpha,\beta) \mid D^1_\alpha \cap D^2_\beta \neq \emptyset\}$ has feasible solution $f_{\alpha\beta} = g(D^1_\alpha \cap D^2_\beta)$.
\end{lemma}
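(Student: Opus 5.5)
The proof is a direct verification that the proposed flow $f_{\alpha\beta} = g(D^1_\alpha \cap D^2_\beta)$ satisfies every condition of Definition~\ref{def:fat}. Here $g(S) := \sum_{d \in S} g(d)$ for $S \subseteq D$. This is finitely additive over disjoint unions and nonnegative because $g \geq 0$. The only structural facts needed are that $D^1$ and $D^2$ are partitions of $D$: their blocks are pairwise disjoint and cover $D$.

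\textbf{Step 1 (balance and nonnegativity).} Supplies $a_\alpha = g(D^1_\alpha) \geq 0$ and demands $b_\beta = g(D^2_\beta) \geq 0$ are nonnegative. Since $D^1$ partitions $D$, additivity gives $\sum_\alpha a_\alpha = g(D) = 1$, and likewise $\sum_\beta b_\beta = 1$. The problem is therefore a balanced FAT problem, already normalised to total supply $1$. Each $f_{\alpha\beta}$ is a sum of nonnegative values of $g$, hence $f_{\alpha\beta} \geq 0$.

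\textbf{Step 2 (flow uses only permitted arcs).} If $(\alpha,\beta) \notin A$, then $D^1_\alpha \cap D^2_\beta = \emptyset$, so $f_{\alpha\beta} = g(\emptyset) = 0$. Hence $f$ is supported on $A$. Equivalently, define $f$ only on $A$; the omitted pairs contribute zero to all the sums below anyway.

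\textbf{Step 3 (supply and demand constraints).} Fix an origin $\alpha$. The sets $D^1_\alpha \cap D^2_\beta$, as $\beta$ ranges, are pairwise disjoint because the blocks of $D^2$ are. Their union is $D^1_\alpha \cap \bigcup_\beta D^2_\beta = D^1_\alpha$ because $D^2$ covers $D$. Additivity of $g$ then gives
\[
\sum_{\beta:(\alpha,\beta)\in A} f_{\alpha\beta} = \sum_\beta g(D^1_\alpha\cap D^2_\beta) = g(D^1_\alpha) = a_\alpha .
\]
The first equality uses Step 2 to pass between the sum over $A$ and the sum over all $\beta$. The demand constraint $\sum_\alpha f_{\alpha\beta} = b_\beta$ follows by the symmetric argument with the roles of $D^1$ and $D^2$ swapped.

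There is no real obstacle. The only point needing care is the bookkeeping in Step 3, in particular in a formalisation: the double sum $\sum_\alpha\sum_\beta$ over intersection blocks must be rewritten as a single sum over $D$, using finiteness of $D$ and the disjointness and cover properties. In Lean this would be done with \texttt{Finset.sum\_biUnion} or a \texttt{sum\_fiberwise}-type lemma, indexing each element $d$ by its unique block in each partition.
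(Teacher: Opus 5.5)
Your proposal is correct and takes essentially the paper's approach. The paper gives no written proof of this lemma, only pointing to the book and \texttt{N\_PartitionFlow.lean}, and the argument behind it is the same direct check you give: $f_{\alpha\beta}=g(D^1_\alpha\cap D^2_\beta)$ is nonnegative, vanishes off $A$, and meets each supply and demand by additivity of $g$ over the disjoint, covering blocks of each partition.
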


\begin{leancert}
\texttt{N\_PartitionFlow.lean} \textbar\ \texttt{theorem prob\_partition\_is\_feasible\_flow}: the flow $f_{\alpha\beta}=g(D^1_\alpha\cap D^2_\beta)$ is feasible (book, Chapter~2).
\end{leancert}

\begin{example}\label{ex:fat-intro}
Let $O=\{1,2,3\}$ with $a(1)=0.3$, $a(2)=0.3$, $a(3)=0.4$; $D=\{4,5,6,7\}$ with $b(4)=0.4$, $b(5)=0.2$, $b(6)=0.1$, $b(7)=0.3$; forbidden arcs $F=\{(1,6),(1,7),(3,4),(3,6)\}$. Since $(2,6)$ is the only arc entering destination $6$, it is rigid with frozen flow $f_{2,6}=0.1$; every other arc admits alternative feasible values, so no other arc is rigid, and there are no dummy arcs. If the capacity $c_{1,4}=2$ is additionally imposed, every arc becomes rigid, and $(2,5)$, $(2,7)$ become dummy arcs (zero frozen flow).
\end{example}

We are interested in a class of FAT problems where the origins are \emph{layers of triangles} carrying a capacity, together with the \emph{rigid pedigrees} found at earlier stages, each shrunk to a single origin node whose capacity is its weight; the destinations are the triangles at the next layer, and an arc is present exactly when one triangle is a generator of the other. This specialisation is developed in Section~\ref{sec:layered-network}.

\subsection{Combinatorial Linear Programs}

The MI-relaxation \eqref{eq:layer}--\eqref{eq:generatorslack}, the FAT problems $F_k$, and the multicommodity flow problem defined in Section~\ref{sec:mcf} all have $\{0,\pm1\}$-entry constraint matrices, placing them in the following class.

\begin{definition}[Combinatorial LP \cite{tardos1986}]\label{def:comb-lp}
A class of linear programming problems is \emph{combinatorial} if the entry sizes of the constraint matrix $A$ are polynomially bounded in the problem dimension.
\end{definition}

Tardos \cite{tardos1986} gave a strongly polynomial algorithm for combinatorial LPs: the number of arithmetic operations depends only on $\dim(A)$, not on the magnitudes of $b$ and $c$. Section~\ref{sec:complexity} needs only that linear programs are solvable in polynomial time \cite{khachiyan1979}; Tardos's theorem is what the companion paper \cite{arthanari2026pnp} uses to obtain a strongly polynomial algorithm.

\section{Construction of the Layered Network}\label{sec:layered-network}

All results in this section are from Chapter~5 of the book \cite{arthanari2023book} unless stated otherwise.

\subsection{A Motivating Example}\label{sec:motivating}

Before the general construction, consider $X \in P_{MI}(7)$ with layer components (edges of $K_k$ in lexicographic order; zero entries omitted):
\[
x_4 = \left(\tfrac12,\tfrac14,\tfrac14\right) \text{ over } \{1,2\},\{1,3\},\{2,3\}, \quad
x_5 = \left(\tfrac14,\tfrac12,\tfrac14\right) \text{ over } \{1,2\},\{1,3\},\{2,3\},
\]
\[
x_6 = \left(\tfrac14,\tfrac14,\tfrac12\right) \text{ over } \{1,2\},\{1,3\},\{2,3\}, \quad
x_7 = \left(\tfrac12,\tfrac14,\tfrac14\right) \text{ over } \{1,4\},\{2,4\},\{3,4\}.
\]
Is $X \in \mathrm{conv}(P_7)$? Building the layered network stage by stage, the nodes at layer $r$ are the edges of $K_{r+3}$ with positive $x_{r+3}(e)$, each carrying capacity $x_{r+3}(e)$. A feasible flow through all four layers, decomposed into three pedigree paths, certifies membership:
\[
P_1\ \left(\lambda=\tfrac12\right): \{1,2\}\to\{1,3\}\to\{2,3\}\to\{1,4\}, \qquad
P_2\ \left(\lambda=\tfrac14\right): \{2,3\}\to\{1,2\}\to\{1,3\}\to\{2,4\},
\]
\[
P_3\ \left(\lambda=\tfrac14\right): \{1,3\}\to\{2,3\}\to\{1,2\}\to\{3,4\}.
\]
At every node, flow in equals flow out equals the node's capacity $x_k(e)$, and the three flows sum to $1$: this certifies $X=\tfrac12 P_1+\tfrac14 P_2+\tfrac14 P_3 \in \mathrm{conv}(P_7)$. In general, however, we are not handed a candidate decomposition; the layered network construction that follows builds and verifies one --- or certifies that none exists --- directly from $X$.

\subsection{Construction for \texorpdfstring{$k=4$}{k=4}}\label{sec:n4}

Let $V_{[r]} = \{v \mid v = [r+3:e],\ e \in E_{r+2},\ x(v)>0\}$; the name $[r+3:e]$ records that $e$ is used for the insertion of city $r+3$. The network $N_4$ has node set $\mathcal{V}(N_4)=V_{[1]}\cup V_{[2]}$ and arc set
\[
\mathcal{A}(N_4) = \{(u,v) \mid u=[4:e_\alpha]\in V_{[1]},\ v=[5:e_\beta]\in V_{[2]},\ e_\alpha \in G(e_\beta)\},
\]
where $G(e_\beta)$ is the set of generators of $e_\beta$. Arc $(u,v)$ carries capacity $x(u)$. We solve the FAT problem $F_4$ on $N_4$. If $F_4$ is infeasible, $X \notin \mathrm{conv}(P_n)$. Otherwise, the FFF algorithm identifies the rigid arcs, giving the rigid set $R_4$ with weights $\mu_P$; node capacities are updated to $\bar x(v)$, dummy arcs and saturated nodes are removed, and $(N_4,R_4,\mu)$ is declared \emph{well-defined}.

\begin{example}\label{ex:n4}
Let $X = (0,\tfrac13,\tfrac23,0,\tfrac16,0,\tfrac16,\tfrac13,\tfrac13)$. Then $\Lambda_4(X)=\{(0,\tfrac13,\tfrac23)\}$ and $I(\lambda)=\{2,3\}$. The FAT problem $F_4$ has origins $\{2,3\}$ with supplies $\tfrac13,\tfrac23$ and destinations $\{2,4,5,6\}$. The feasible flow $f_{24}=f_{26}=f_{32}=f_{36}=\tfrac16$, $f_{35}=\tfrac13$ certifies $X \in \mathrm{conv}(P_5)$ via five rigid pedigrees. Each arc $(\alpha,\beta)$ with flow $f_{\alpha\beta}$ gives the pedigree whose common edges at stages $4$ and $5$ are the edges $e_\alpha$ and $e_\beta$ and whose weight is $f_{\alpha\beta}$:
\begin{center}
\upshape\renewcommand{\arraystretch}{1.25}
\begin{tabular}{@{}llcc@{}}
\hline
 & Triangles of the pedigree & Common edges $(e_4,e_5)$ & Weight \\ \hline
$P^1$ & $\{1,2,3\},\{1,3,4\},\{2,3,5\}$ & $\big((1,3),(2,3)\big)$ & $1/6$ \\
$P^2$ & $\{1,2,3\},\{1,3,4\},\{3,4,5\}$ & $\big((1,3),(3,4)\big)$ & $1/6$ \\
$P^3$ & $\{1,2,3\},\{2,3,4\},\{1,3,5\}$ & $\big((2,3),(1,3)\big)$ & $1/6$ \\
$P^4$ & $\{1,2,3\},\{2,3,4\},\{3,4,5\}$ & $\big((2,3),(3,4)\big)$ & $1/6$ \\
$P^5$ & $\{1,2,3\},\{2,3,4\},\{2,4,5\}$ & $\big((2,3),(2,4)\big)$ & $1/3$ \\ \hline
\end{tabular}
\end{center}
The weights sum to $1$, and $\sum_r \lambda_r\chi^{P^r}=X$: for instance $x_4(1,3)=\tfrac16+\tfrac16=\tfrac13$, $x_4(2,3)=\tfrac23$, $x_5(3,4)=\tfrac16+\tfrac16=\tfrac13$ and $x_5(2,4)=\tfrac13$.
\end{example}

\subsection{Construction for \texorpdfstring{$k>4$}{k>4}}\label{sec:nk}

Given $(N_{k-1},R_{k-1},\mu)$ well-defined, the node set of $N_k$ is $\mathcal{V}(N_k) = \mathcal{V}(N_{k-1}) \cup R_{k-1} \cup V_{[k-2]}$: every rigid pedigree $P \in R_{k-1}$ is \emph{shrunk} to a single new origin node of capacity $\mu_P$, alongside the new sink nodes $V_{[k-2]}$.

A pair $\big([k:e_\alpha],[k+1:e_\beta]\big)$ with $e_\alpha \in E_{k-1}$, $e_\beta \in E_k$ is a \emph{link} if $\bar{x}_k(e_\alpha)>0$, that is the updated capacity is positive, and $x_{k+1}(e_\beta)>0$. To extend a pedigree $P \in P_{k}$ ending in  $e_\alpha$ by $e_\beta$, two requirements must hold: a generator of $e_\beta$ must already occur in $P$;  and $e_\beta$ is not already be a common edge of $P$. Similarly, a pedigree $P \in R_{k-1}$ can be extended to a pedigree in $P_{k+1}$ using an $e_\beta \in E_k$ with $x_{k+1}(e_\beta)>0$ if a generator of $e_{\beta}$ is in $P$ and and $e_\beta$ is not already be a common edge of $P$. 

The \emph{restricted network} $N_{k-1}(L)$ for a link $L$ is obtained from $N_{k-1}$ by deleting every node that would render any path in the restricted network an invalid pedigree path. Explicitly, the deleted nodes are:
(i) the nodes  in neither the generator set of $e_\alpha$ nor that of $e_\beta$;
(ii) all other sinks at layer $k-3$; and
(iii) by cascading, any node all of whose generators have thereby been deleted. In addition, any rigid pedigree whose path contains a deleted node is deleted.  

This deletion process runs in time linear in $|\mathcal{V}(N_{k-1})|$ (Chapter~5, \S5.2 of the book, Definition~5.2.1).

Let $C(L)$ be the value of the maximum flow into the sole sink $[k:e_\alpha]$ of the restricted network $N_{k-1}(L)$ --- computable by any efficient bipartite maximum-flow algorithm \cite{korte2011combinatorial} --- and, if the path achieving this flow is unique, it is recorded as $P_{\mathrm{unique}}(L)$. The FAT problem $F_k$ ($k>4$) then has origins $V_{[k-3]}$ together with one shrunk node per $P \in R_{k-1}$ (availability $\mu_P$), sinks $V_{[k-2]}$, and an arc $(u,v)$ of capacity $C(L)$ for each link $L=(e_\alpha,e_\beta)$ with $C(L)>0$, or of capacity $\mu_P$ when $u=P \in R_{k-1}$ and $(P,e_\beta) \in P_{k+1}$.

Once $F_k$ is feasible, the rigid set $R_k$ is driven by the rigid arcs of $F_k$, and a rigid pedigree enters $R_k$ in one of two ways, with $\mu$ the rigid flow on the arc:
(i) if a link $L$ is rigid in $F_k$ and the path bringing the flow $C(L)$ is unique, the extended pedigree $(P_{\mathrm{unique}}(L),e_\beta)$ is included in $R_k$, with weight $C(L)$;
(ii) if the origin of a rigid arc in $F_k$ is a shrunk node $P \in R_{k-1}$, the extended pedigree $(P,e_\beta)$ is included in $R_k$, with weight the rigid flow, $\mu_P$.
Node and arc capacities are then updated, saturated nodes and dummy arcs are removed, and $(N_k,R_k,\mu)$ is constructed.

\begin{lemma}\label{lem:nonpedigreepath}
If a link $L=(u,v)$ is rigid with unique path $P_{\mathrm{unique}}(L)$, and $(P_{\mathrm{unique}}(L), e_\beta)$ is \emph{not} a pedigree in $P_{k+1}$, then $X/(k+1) \notin \mathrm{conv}(P_{k+1})$.
\end{lemma}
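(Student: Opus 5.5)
We argue by contradiction. Suppose $X/(k+1)=\sum_r \lambda_r X^r$ with $\lambda_r>0$, $\sum_r\lambda_r=1$, and each $X^r\in P_{k+1}$. The plan is to push this convex combination down through the layered network, and to show that the rigid link $L=([k:e_\alpha],[k+1:e_\beta])$ must carry positive flow coming from some pedigree $X^r$. That $X^r$ must then be, on its first $k-2$ triangles, exactly $P_{\mathrm{unique}}(L)$, and its last triangle is $e_\beta$. This contradicts the hypothesis that $(P_{\mathrm{unique}}(L),e_\beta)\notin P_{k+1}$.

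\textbf{Step 1: restrict the decomposition to each stage.} By Lemma~\ref{lem:truncation}, every $X^r/k'$ is a pedigree in $P_{k'}$ for $k'\le k+1$. Hence the $\lambda_r$ induce flows on every $N_{k'}$.
\begin{itemize}
\item At each stage, group the pedigrees by their common edges at two consecutive layers.
\item Lemma~\ref{lem:flow-partition}, with $g=\lambda$ and the two partitions given by the layer-$k'$ and layer-$(k'+1)$ edges, gives a feasible solution of $F_{k'}$. In particular every $F_{k'}$ is feasible.
\end{itemize}
By induction on $k'$ we check that this flow respects the restricted-network capacities $C(L')$ and the shrunk rigid nodes. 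For the rigid nodes: an arc that is rigid in $F_{k'}$ has its flow frozen in every feasible solution, including the one induced by $\lambda$. So the pedigrees counted in $R_{k'}$ account for exactly weight $\mu_P$ among the $X^r$. The capacity updates $\bar x$ are therefore consistent with what remains of the decomposition.

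\textbf{Step 2: positivity on the link.} Apply the same induced flow to $F_k$. The link $L$ is rigid, and it is not a dummy arc, since dummy arcs are removed and $P_{\mathrm{unique}}(L)$ is only recorded when positive flow $C(L)>0$ is frozen on it. So the induced flow on $L$ equals its frozen value, which is positive. Consequently some $X^r$ has common edge $e_\alpha$ at stage $k$ and $e_\beta$ at stage $k+1$, with $\lambda_r>0$.

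\textbf{Step 3: identify the prefix of $X^r$.} The prefix $X^r/k$ is a valid pedigree path ending at the sink $[k:e_\alpha]$.
\begin{itemize}
\item It avoids every node deleted in forming $N_{k-1}(L)$. The deletion rules (i)--(iii) remove only nodes that cannot lie on a pedigree path whose extension by $e_\beta$ is valid, and $X^r$ is such a pedigree.
\item Hence $X^r/k$ is a path carrying flow into $[k:e_\alpha]$ inside $N_{k-1}(L)$.
\item The induced flows of all such $X^r$ together form a feasible flow into that sink of value equal to the rigid flow $C(L)$, i.e. a maximum flow.
\end{itemize}
Since the maximum-flow path is unique, every such $X^r/k$ equals $P_{\mathrm{unique}}(L)$. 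But $X^r=(X^r/k,e_\beta)$ is a pedigree, which contradicts the hypothesis.

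\textbf{Main obstacle.} The delicate step is Step~3. We must show that the flow contributed by the decomposition into $[k:e_\alpha]$ through $N_{k-1}(L)$ attains the maximum $C(L)$, not merely some positive amount, so that uniqueness of the maximising path applies. The approach is to use rigidity of $L$ in $F_k$: any feasible solution puts exactly $C(L)$ on $L$, and the decomposition's flow on $L$ is routed entirely through paths surviving in $N_{k-1}(L)$. If that identification proves awkward, a fallback is available. Uniqueness of the path carrying $C(L)$ means that any path into the sink supporting positive flow lies on $P_{\mathrm{unique}}(L)$, which is what Step~3 needs.
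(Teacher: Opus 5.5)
Your proposal is correct and follows the paper's own one-line justification: the flow induced by any convex representation of $X/(k+1)$ is feasible for $F_k$ (the instant-flow argument behind Theorem~\ref{thm:nonmembership}), so rigidity of $L$ forces positive weight onto $(P_{\mathrm{unique}}(L),e_\beta)$, which would then have to be a pedigree. Step~3 tacitly uses that the frozen flow on $L$ equals $C(L)$, as the paper also does when it assigns this extension the weight $C(L)$; that is what makes the induced flow in $N_{k-1}(L)$ a maximum flow, so that uniqueness applies.
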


This would be an immediate obstruction --- a rigid arc forces positive weight onto an object that is not a valid pedigree --- but Section~\ref{sec:mcf} shows this situation cannot in fact arise when $F_k$ is feasible: pedigree packability (Theorem~\ref{thm:packability-arcflows}, proved in Chapter~5 of the book) guarantees every arc flow in $F_k$ decomposes into genuine pedigree paths.

\begin{example}\label{ex:n4-infeasible}
Let $X \in P_{MI}(6)$ with $x_4=(0,\tfrac34,\tfrac14)$, $x_5=(\tfrac12,0,0,0,0,\tfrac12)$, $x_6=(0,\tfrac14,\tfrac14,0,\tfrac14,\tfrac14,0,0,0,0)$. Both $F_4$ and $F_5$ are feasible, with $R_4=R_5=\emptyset$. Yet $X \notin \mathrm{conv}(P_6)$: the capacity at $[4:2,3]$ is exhausted certifying $x_6(1,3)=\tfrac14$, leaving no capacity for the flow required by $x_6(2,4)=\tfrac14$, whose only generator is $(2,3)$. This shows that $F_k$ feasibility alone --- without the multicommodity coupling of Section~\ref{sec:mcf} --- is not sufficient for membership; it is exactly this gap that the multicommodity flow problem $\mathrm{MCF}(k)$ closes.
\end{example}

\subsection{A Sufficient Condition for Non-Membership}

\begin{theorem}[Non-Membership; Theorem~7 of \cite{arthanari2025membership}]\label{thm:nonmembership}
Given $X \in P_{MI}(n)$ and $X/k \in \mathrm{conv}(P_k)$, if $F_k$ is infeasible then $X/(k+1) \notin \mathrm{conv}(P_{k+1})$.
\end{theorem}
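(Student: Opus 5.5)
We prove the contrapositive: assuming $X/(k+1) \in \mathrm{conv}(P_{k+1})$, we exhibit a feasible solution of $F_k$. Fix a convex witness $X/(k+1) = \sum_{r} \lambda_r Y^r$ with $\lambda_r>0$, $\sum_r \lambda_r = 1$ and $Y^r \in P_{k+1}$. By Lemma~\ref{lem:truncation}, each truncation $Y^r/k$ is a pedigree in $P_k$, so $X/k = \sum_r \lambda_r (Y^r/k)$. We take $D = I(\lambda)$ with probability distribution $g(r) = \lambda_r$, and define two partitions of $D$. The partition $D^1$ groups pedigrees according to their origin in $F_k$: either the shrunk node $P \in R_{k-1}$ of which $Y^r/k$ is an extension, or otherwise the layer-$k$ common edge $e_\alpha$ of $Y^r$. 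The partition $D^2$ groups pedigrees by their layer-$(k+1)$ common edge $e_\beta$. Lemma~\ref{lem:flow-partition} then gives a flow $f_{\alpha\beta} = g(D^1_\alpha \cap D^2_\beta)$ that meets every supply and demand exactly and uses only pairs $(\alpha,\beta)$ with nonempty intersection.

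It remains to match the supplies, demands and arcs of this flow with those of $F_k$. On the demand side, $g(D^2_\beta) = x_{k+1}(e_\beta)$ because the coordinates of $X/(k+1)$ at layer $k+1$ are reproduced by the combination. On the supply side, the supplies must equal the updated capacities $\bar x_k(e_\alpha)$ and the rigid weights $\mu_P$. Here we use the defining property of rigidity, namely that the weights frozen in the construction of $(N_{k-1},R_{k-1},\mu)$ take the same value in every decomposition of $X/k$ compatible with the network. Consequently, in any convex witness, the total weight on extensions of $P \in R_{k-1}$ is exactly $\mu_P$, and the residual weight ending in $e_\alpha$ is $\bar x_k(e_\alpha)$. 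This is an induction over the stages $4,\dots,k$ of the construction, and at stage $4$ it is immediate from Lemma~\ref{lem:flow-partition} applied to $F_4$.

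Arcs are handled next. Whenever $D^1_\alpha \cap D^2_\beta \neq \emptyset$, some $Y^r$ is a pedigree ending in $(e_\alpha,e_\beta)$. A generator of $e_\beta$ therefore occurs in $Y^r$ and $e_\beta$ is new, so $(e_\alpha,e_\beta)$ is a link, or $(P,e_\beta)\in P_{k+1}$ in the shrunk case. Moreover, the path of $Y^r/k$ survives every deletion rule (i)--(iii) of the restricted network $N_{k-1}(L)$, since every node on it is a valid ancestor in a genuine pedigree extendable by $e_\beta$.

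We expect the main obstacle to be the capacity bound $f_{\alpha\beta} \le C(L)$. The plan is to observe that the paths $Y^r/k$ with $r \in D^1_\alpha\cap D^2_\beta$ form a path decomposition of a flow into the sink $[k:e_\alpha]$ of $N_{k-1}(L)$, with total value $f_{\alpha\beta}$. This flow respects the node capacities of $N_{k-1}(L)$, because at each node the weight passing through it is at most that node's (updated) capacity, by the coordinate identity $X/k=\sum_r\lambda_r(Y^r/k)$ together with the rigidity accounting from the previous step. Hence $f_{\alpha\beta}$ is at most the maximum flow $C(L)$. For shrunk origins, the bound $f \le \mu_P$ is immediate from the supply. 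The resulting $f$ is therefore feasible for $F_k$, which contradicts the assumed infeasibility.
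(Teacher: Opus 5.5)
Your route is the paper's. The flow $f_{\alpha\beta}=g(D^1_\alpha\cap D^2_\beta)$ that you obtain from Lemma~\ref{lem:flow-partition} is exactly the instant flow built from $\lambda$. Your origin partition is the paper's dichotomy ``rigid pedigree in $R_{k-1}$ or available path in the restricted network'' (the Lemma~A.3 cited in the sketch), and your matching of supplies, demands, arcs and capacities is the feasibility check of Lemma~A.2.

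There is, however, a hole in the step you flag as the main obstacle. To conclude $f_{\alpha\beta}\le C(L)$, the flow carried by the paths $Y^r/k$, $r\in D^1_\alpha\cap D^2_\beta$, must be \emph{feasible} in $N_{k-1}(L)$. That network has arc capacities as well as node capacities: its arcs are arcs of earlier problems $F_l$, $l<k$, with capacities $C(L')$ (or $\mu_P$, or their updated values). You check only node capacities. You also tacitly assume that the path of $Y^r/k$ is present in $N_{k-1}$ at all. That requires each consecutive pair on it to have been an arc with $C(L')>0$ at its stage, and not to have been removed as a dummy arc or through a saturated node. Surviving deletion rules (i)--(iii) presupposes this; it does not establish it.

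The fix is to strengthen your induction. Instead of carrying only the rigidity accounting, prove for every $l\le k$ that the instant flow of the truncated witness $\{Y^r/(l+1)\}$ is feasible for $F_l$. This hypothesis does three things.
\begin{itemize}
\item Positive instant flow on a pair means it is a permitted arc of $F_l$, so $C(L')>0$, and it cannot be a dummy arc.
\item The sub-family's weight on any arc of $N_{k-1}$ is at most the full instant flow there, hence at most that arc's capacity.
\item Feasibility turns your ``defining property of rigidity'' from an assertion into a consequence. Rigidity (Definition~\ref{def:rigid}) concerns feasible solutions of $F_l$, not decompositions of $X/k$, so you need the instant flow to be one of those feasible solutions before any frozen value transfers to the witness weights.
\end{itemize}
In case (i) of the rigid-set construction you also need that all witness pedigrees through a rigid link $L$ have prefix $P_{\mathrm{unique}}(L)$. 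This uses the same capacity argument at stage $l$: their prefixes form a feasible flow of value $C(L)$ in $N_{l-1}(L)$, i.e.\ a maximum flow, which the uniqueness of the path carrying $C(L)$ forces onto $P_{\mathrm{unique}}(L)$. With this strengthened hypothesis, your argument at stage $k$ goes through as written.
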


\begin{proof}[Sketch]
By contradiction: if $F_k$ is infeasible yet $X/(k+1) \in \mathrm{conv}(P_{k+1})$, then every pedigree active in some convex combination for $X/(k+1)$ has its restriction to $k$ realised either as a rigid pedigree in $R_{k-1}$ or as an available path in the appropriate restricted network (Lemma~A.3 of the appendix); the corresponding \emph{instant flow}, built directly from the weight vector $\lambda$, is then a feasible solution to $F_k$ (Lemma~A.2), contradicting infeasibility.
\end{proof}

\begin{leancert}
\texttt{VN\_Infeasibility.lean} \textbar\ \texttt{theorem VN\_infeasibility}: $F_k$ infeasible $\Rightarrow$ $X/(k+1) \notin \mathrm{conv}(P_{k+1})$ (book, Chapter~5).
\end{leancert}

\section{Multicommodity Flow and the Necessary and Sufficient Condition}\label{sec:mcf}

All proofs in this section are from Chapter~5 of the book \cite{arthanari2023book}. The Packability Corollary (Theorem~\ref{thm:packability}) has a new, Lean-4-verified proof given in this paper.

\subsection{Pedigree Packability of Arc Flows}

Example~\ref{ex:n4-infeasible} showed that feasibility of $F_k$ alone does not guarantee that its arc flows decompose into genuine pedigree paths without conflict. The next theorem shows that, given $X/k \in \mathrm{conv}(P_k)$, every arc flow in $F_k$ \emph{is} pedigree packable --- it can be attributed to an explicit, capacity-respecting collection of pedigree paths --- though not necessarily all of them \emph{simultaneously}, which is exactly the coupling that $\mathrm{MCF}(k)$ resolves.

\begin{theorem}[Pedigree Packability]\label{thm:packability-arcflows}
Given $X \in P_{MI}(n)$ and $(N_{k-1},R_{k-1},\mu)$ well-defined (so $X/k \in \mathrm{conv}(P_k)$), fix $\lambda \in \Lambda_k(X)$. For any $[k:e] \in V_{[k-3]}$ and $[k+1:e'] \in V_{[k-2]}$, there is a flow $f^L$ in $N_{k-1}(L)$ for the link $L=(e,e')$ such that: (1) $f^L$ is pedigree packable; (2) summing over all such links into $e'$ gives $x_{k+1}(e') - \delta_{e'}$, where $\delta_{e'}$ is the flow into $e'$ contributed directly by rigid pedigrees; and (3) the resulting node capacities are respected.
\end{theorem}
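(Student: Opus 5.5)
The plan is to build the link flows directly from the fixed convex witness $\lambda \in \Lambda_k(X)$, in the spirit of the \emph{instant flow} used in the proof of Theorem~\ref{thm:nonmembership}. Write $X/k = \sum_{r \in I(\lambda)} \lambda_r X^r$ with $X^r \in P_k$. For each link $L=(e,e')$ with $[k:e] \in V_{[k-3]}$ and $[k+1:e'] \in V_{[k-2]}$, I would let $I_L$ be the set of $r$ whose pedigree $X^r$ ends in $e$ at stage $k$, contains a generator of $e'$, and does not already use $e'$ as a common edge; these are exactly the pedigrees extendable by $e'$. Each such $X^r$, read as a path through the layers of $N_{k-1}$, avoids every node deleted in forming $N_{k-1}(L)$. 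It avoids the non-generator nodes by the generator condition of Definition~\ref{def:pedigree}, it avoids the other sinks at layer $k-3$ because it ends at $e$, and it avoids the cascaded nodes because every node on it has its own generator on the same path. So $X^r$ survives as a path in $N_{k-1}(L)$, unless its restriction is absorbed into a rigid pedigree, and that case is accounted for in $\delta_{e'}$.

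The flow will be defined as follows. Choose weights $\theta_{r,e'} \ge 0$ with $\sum_{e'} \theta_{r,e'} \le \lambda_r$, to be fixed in the next step. Set $f^L$ to be the superposition of the paths $X^r$, $r \in I_L$, each carrying $\theta_{r,e'}$. Part (1) is then immediate: by construction $f^L$ is a nonnegative combination of genuine pedigree paths, each of which is extendable by $e'$ (the lift is exactly as in Theorem~\ref{thm:packability}), so $f^L$ is pedigree packable. Part (3) follows because, at any node $[j:e_j]$, the total over all links of $\sum \theta_{r,e'}$ across paths through that node is at most $\sum_{r:\,e_j \in X^r} \lambda_r = x_j(e_j)$. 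After the rigid weights $\mu$ are removed, this is the updated capacity $\bar x_j(e_j)$; this uses the fact that rigid pedigrees carry fixed weight in every decomposition, so they appear in $\lambda$ with weight $\mu_P$.

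The main obstacle is choosing $\theta$ so that part (2) holds. We need $\sum_{e} \sum_{r \in I_L} \theta_{r,e'} = x_{k+1}(e') - \delta_{e'}$ for every sink $e'$, while no $r$ distributes more than $\lambda_r$ in total. This is a bipartite transportation problem. Its origins are the indices $r$ with supply $\lambda_r$, less any weight already committed to rigid pedigrees; its destinations are the sinks $e'$ with demand $x_{k+1}(e') - \delta_{e'}$; and its arcs are the extendable pairs. I would first try to certify feasibility through Hall/Gale's condition: for every set $S$ of sinks, the total $\lambda$-weight of pedigrees extendable into $S$ must be at least the demand of $S$. The route to this condition is the MI constraints \eqref{eq:e3slack}--\eqref{eq:generatorslack} combined with Lemma~\ref{lem:integer-slack}. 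Each $X^r$ has slack vector equal to its tour, so summing gives $\sum_r \lambda_r u^r = u$, the slack of $X/k$. Demand on an edge $e'$ is bounded by $u_{e'}$ through \eqref{eq:generatorslack}, and every unit of $u^r_{e'}$ is a pedigree in which $e'$ is available, that is, extendable. If the Hall step fails for this particular $\lambda$, the fallback is weaker. Since the theorem only asserts that each arc flow is packable separately, not simultaneously, it suffices to allot to each single $e'$ the weight $\sum_r \lambda_r u^r_{e'} \ge x_{k+1}(e')$ independently. Capacity compatibility across different $e'$ is then left to $\mathrm{MCF}(k)$, which is exactly the coupling the theorem's preamble says remains.
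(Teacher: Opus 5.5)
Your fallback is the correct argument. It is also the per-$e'$ argument the paper points to: the paper defers the proof to the book, but it names Lemma~\ref{lem:integer-slack} as the engine and says the arc flows are packable only separately, not simultaneously. The steps are these. Since $\sum_r\lambda_r=1$, the slack is affine in $X$, so $u(X/k)=\sum_r\lambda_r u(X^r)$. Next, $u^r_{e'}=1$ exactly when $X^r$ is extendable by $e'$. Finally, $x_{k+1}(e')\le u_{e'}(X/k)$ by \eqref{eq:e3slack}--\eqref{eq:generatorslack}. So for each $e'$ separately you may choose $0\le\theta_{r,e'}\le\lambda_r u^r_{e'}$ with $\sum_r\theta_{r,e'}=x_{k+1}(e')$. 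Define $\delta_{e'}$ as the part of this allotment carried by pedigrees of $R_{k-1}$, and group the remaining pedigrees by their layer-$k$ edge $e$ to obtain the $f^L$. Do not import $\delta_{e'}$ from some solution of $F_k$: for such a value, the non-rigid weight available to $e'$ is not guaranteed to cover $x_{k+1}(e')-\delta_{e'}$.

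The Hall/Gale route has to go. It does not merely risk failing for an unlucky $\lambda$; it cannot succeed in general. Suppose $\sum_{e'}\theta_{r,e'}\le\lambda_r$ held with every demand met. Because $\sum_{e'}x_{k+1}(e')=1=\sum_r\lambda_r$, all these inequalities would be tight, and the extended pedigrees would exhibit $X/(k+1)\in\mathrm{conv}(P_{k+1})$ from the theorem's hypotheses alone. Example~\ref{ex:n4-infeasible} with $k=5$ refutes this. There, $(1,3)$ and $(2,4)$ are available only in pedigrees with layer-$4$ edge $(2,3)$, whose total weight is $x_4(2,3)=\tfrac14$ for every $\lambda$, while their combined demand is $\tfrac12$. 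The singleton slack bounds you invoke yield Hall's condition only for $|S|=1$, because $\sum_{e'\in S}u_{e'}$ counts a pedigree once for each edge of $S$ in its tour.

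This has two consequences for your write-up.
\begin{enumerate}
\item Your proof of (3) rests on $\sum_{e'}\theta_{r,e'}\le\lambda_r$, so it must be replaced by the per-$e'$ bound $\theta_{r,e'}\le\lambda_r$. That bound gives at most $\bar x_j(e_j)$ at each node, summed over the links into a fixed $e'$. This per-$e'$ reading is the only one in which (3) can hold: in the same example, the flows into $(1,3)$ and into $(2,4)$ together push $\tfrac12$ through $[4:(2,3)]$, whose capacity is $\tfrac14$.
\item The step \emph{read $X^r$ as a path in $N_{k-1}$} is not automatic, since dummy arcs, saturated nodes and zero-capacity links have been removed. It needs the realisation fact used in the sketch of Theorem~\ref{thm:nonmembership}: every active pedigree appears either as a rigid pedigree or as an available path in the restricted network. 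Cite that fact rather than assuming it.
\end{enumerate}
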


The proof is given in Chapter~5 of the book \cite{arthanari2023book}. This theorem is not formalised in Lean~4 and is not needed for the machine-verified chain: the proof of sufficiency uses only the Packability Corollary (Theorem~\ref{thm:packability}), which is proved independently in Section~\ref{sec:preliminaries} and verified in Lean~4.

\subsection{Defining \texorpdfstring{$\mathrm{MCF}(k)$}{MCF(k)}}

Once $F_k$ is found to be feasible, we construct $(N_k, R_k, \mu)$. If $N_k$ is empty, we have a certificate that $X/(k+1) \in \mathrm{conv}(P_{k+1})$, given by $R_k$ with $\sum_{P \in R_k} \mu(P) = 1$.

\begin{definition}[Commodities]\label{def:commodities}
For every arc $a \in \mathcal{A}(N_k)\setminus \mathcal{A}(N_4)$, designate a unique commodity $s$; write $a \leftrightarrow s$. Let $\mathcal{S}_l$ denote the commodities designated by arcs in $F_l$, for $5 \leq l \leq k$.
\end{definition}

Let $z_{\max} = 1 - \sum_{P \in R_{k}} \mu_P$ be the maximum achievable total flow, determined once the rigid pedigree weights at stage $k$ are fixed, and let $z^*$ denote the optimal value of the objective in the multicommodity flow problem defined next. The \emph{MCF$(k)$ decision problem} asks: is $z^* = z_{\max}$?

The problem $\mathrm{MCF}(k)$ is stated in full in Appendix~\ref{sec:appendix-mcf} (Problem~\ref{prob:mcf}). In words: for each $5 \leq l \leq k$, one commodity per arc of $F_l$ is routed through its restricted network $N_{l-1}(s)$, subject to arc capacities, flow conservation, node capacities $\bar x(v)$ and the availability bounds at the source nodes and at the shrunk rigid pedigrees; the objective is to maximise the total flow $z=\sum_{s \in \mathcal{S}_k} v^s$ into the arcs of $F_k$.

\subsection{The Necessary and Sufficient Condition}

\begin{definition}\label{def:Ys}
For $s \in \mathcal{S}_k$, let $Y^s \in \mathbb{R}^{\tau_k}$ have components $y^s_l(e)$ equal to the total flow of commodity $s$ through node $[l:e] \in V_{[l-3]}$, including any contribution routed through rigid paths containing that node.
\end{definition}

\begin{lemma}\label{lem:y-in-MI}
If $\mathrm{MCF}(k)$ is feasible with $z^*=z_{\max}$, then $\tfrac1{v^s}Y^s \in P_{MI}(k)$ for every $s \in \mathcal{S}_k$.
\end{lemma}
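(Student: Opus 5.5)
The plan is to verify the three families of constraints \eqref{eq:layer}--\eqref{eq:generatorslack} for the normalised vector $\tfrac1{v^s}Y^s$ directly, together with nonnegativity. Nonnegativity is immediate, since $Y^s$ is built from nonnegative commodity flows (and nonnegative rigid weights). The strategy is to view commodity $s$, which is designated by an arc of $F_k$ with value $v^s$, as a flow of total value $v^s$ through the restricted network $N_{k-1}(s)$. Here every source-to-sink path is, by construction of the restricted network, a valid pedigree path (deletion rules (i)--(iii), together with the removal of rigid pedigrees containing deleted nodes). I will first decompose the flow of commodity $s$ into path flows, $Y^s=\sum_{\pi} \phi_\pi \chi^{\pi}$, with $\sum_\pi \phi_\pi = v^s$. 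Rigid paths enter as shrunk nodes whose contribution is already a full pedigree. This decomposition exists by standard flow decomposition in an acyclic layered network.

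With the decomposition in hand, the main step is to show that each path $\pi$ carrying positive flow is the characteristic vector of a genuine pedigree in $P_k$. Then $\tfrac1{v^s}Y^s=\sum_\pi \tfrac{\phi_\pi}{v^s}\chi^\pi$ is a convex combination of points of $P_k\subset P_{MI}(k)$. Convexity of $P_{MI}(k)$ gives the claim, and in fact gives the stronger conclusion $\tfrac1{v^s}Y^s\in\mathrm{conv}(P_k)$. The layer equations \eqref{eq:layer} then follow because each path meets every layer $l\le k$ in exactly one node, so $\sum_{e} y^s_l(e)=v^s$. Likewise \eqref{eq:e3slack} and \eqref{eq:generatorslack} hold pathwise, since a pedigree uses each common edge at most once and only after a generator is present.

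I expect the main obstacle to be justifying that every path in the decomposition is a pedigree path. This requires two things: the generator condition, and distinctness of common edges, along the whole path rather than just between consecutive layers. For the generator condition I would argue by induction on $l$, using that arcs of $N_{l-1}$ join $[l:e_\alpha]$ to $[l+1:e_\beta]$ only when the link's restricted network retains a generator of $e_\beta$, and that the cascading deletion (iii) removes nodes whose generators have disappeared. Distinctness is subtler, since the restricted networks are defined link by link. Here I would use the hypothesis $z^*=z_{\max}$: if a path repeated a common edge, the flow it carries could not be attributed to any pedigree. Combined with the capacity bounds $\bar x(v)$ and the fact that rigid weights already account for $1-z_{\max}$, this would force total flow strictly below $z_{\max}$. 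If this counting argument proves delicate, a fallback is to check only the inequalities \eqref{eq:e3slack}--\eqref{eq:generatorslack} aggregated over paths. Each such inequality compares flow through $[j:\cdot]$ nodes containing $i$ with later flow through nodes using edge $(i,j)$. Flow conservation in the restricted network, together with the generator-only arc structure, gives these aggregated inequalities without needing each individual path to be a pedigree.
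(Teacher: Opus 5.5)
Your main route rests on the claim that every path in a flow decomposition of commodity $s$ is a pedigree path. The construction does not supply this, and it is not what the paper proves.

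\begin{itemize}
\item An arc $([l:e_\alpha],[l+1:e_\beta])$ of $N_l$ exists because \emph{some} path of its own restricted network $N_{l-1}(L)$ reaches $[l:e_\alpha]$ and is extendable by $e_\beta$. It does not say every path through $[l:e_\alpha]$ is.
\item $N_{k-1}(s)$ deletes nodes only with respect to its designating link $(e'_s,e_s)$.
\item The generator and distinctness conditions for an intermediate node $[m:(i,j)]$ depend on which node \emph{that same path} used at stage $j$ and at the stages between $j$ and $m$. Local node deletion cannot encode such path-dependent conditions.
\item The coupling constraints \eqref{mcf:total}--\eqref{mcf:vs} tie the commodities together only in aggregate, arc by arc, not path by path.
\end{itemize}
So nothing forces a path of commodity $s$ to avoid, say, a node $[j:e]$ with $i\notin e$ followed later by $[m:(i,j)]$. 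Your appeal to $z^*=z_{\max}$ does not close this: it fixes only the total $\sum_s v^s$, and nothing in the LP makes flow on an invalid path count for less. Note also that if the pathwise claim held, you would get $\tfrac1{v^s}Y^s\in\mathrm{conv}(P_k)$ immediately. Lemma~\ref{lem:y-in-conv}, with its layer-by-layer use of Theorem~\ref{thm:packability}, would then be superfluous. That the paper proves only the $P_{MI}(k)$ statement here signals that pathwise validity is not available.

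Your fallback, verifying the aggregated inequalities directly, is the paper's route, but its decisive step is asserted rather than proved. The paper's argument runs as follows.
\begin{itemize}
\item The layer sums $\sum_e y^s_l(e)=v^s$ give \eqref{eq:layer}.
\item Take the least stage $q$ at which some slack $U_{i_0j_0}$ is negative. By minimality, the slack $V_{i_0j_0}$ of $Y^s/(q-1)$ is nonnegative.
\item The flow of commodity $s$ into $[q:(i_0,j_0)]$ is at most the flow through the generator nodes of $(i_0,j_0)$, minus the flow already absorbed by nodes $[m:(i_0,j_0)]$ with $j_0<m<q$. This quantity is exactly $V_{i_0j_0}$.
\item Hence $U_{i_0j_0}=V_{i_0j_0}-y^s_q(i_0,j_0)\ge 0$, a contradiction.
\end{itemize}
The bound on the inflow to $[q:(i_0,j_0)]$ is the entire content. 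It does not follow from nodewise conservation plus a ``generator-only arc structure''. The arcs entering $[q:(i_0,j_0)]$ come from stage $q-1$, and those nodes are in general not generators of $(i_0,j_0)$; the generators are the nodes $[j_0:e]$ with $i_0\in e$. To complete the fallback, you must show how the restricted networks that carry the flow on those arcs force passage through a generator at stage $j_0$ and exclude intermediate uses of $(i_0,j_0)$. This should be organised as the induction on $q$ above.
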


\begin{proof}[Sketch]
Non-negativity is immediate. The layer-sum equalities $\sum_{e \in E_{l-1}} y^s_l(e) = v^s$ for $l\in[4,k]$ give $\tfrac1{v^s}Y^s/4 \in P_{MI}(4)$. If the claim failed, let $q$ be minimal with a negative slack $U_{i_0j_0}$ at $q$; since $Y^s/(q-1)$ is feasible its slack $V_{i_0j_0} \geq 0$, and the maximum flow of commodity $s$ into $[q:(i_0,j_0)]$ is bounded above by the flow through the generator nodes of that edge minus the flow already absorbed at intermediate layers, which is precisely $V_{i_0j_0}$; flow conservation then forces $U_{i_0j_0} = V_{i_0j_0} - y^s_q(e) \geq 0$, a contradiction.
\end{proof}

\begin{leancert}
\texttt{N\_YisinMI.lean} \textbar\ \texttt{theorem Y\_s\_in\_PMI}: if $\mathrm{MCF}(k)$ is feasible with $z^*=z_{\max}$, then $\tfrac1{v^s}Y^s \in P_{MI}(k)$ for every $s \in \mathcal{S}_k$ (book, Chapter~5, Lemma~5.6).
\end{leancert}

\noindent\textbf{Notation.} For $e \in E_k$, $\mathrm{ind}(e) \in \mathbb{R}^{|E_k|}$ is the indicator vector of $e$: its component at $e$ is $1$ and all its other components are $0$. For $Z \in \mathbb{R}^{\tau_k}$, the vector $(Z,\mathrm{ind}(e)) \in \mathbb{R}^{\tau_{k+1}}$ is $Z$ followed by $\mathrm{ind}(e)$ as the layer-$(k+1)$ block, which records that city $k+1$ is inserted into the edge $e$.

\begin{lemma}\label{lem:y-in-conv-base}
Let $k=5$ and $\sigma \in \mathcal{S}_5$ be designated by $a=([5:e'_\sigma],[6:e_\sigma])$. If $\mathrm{MCF}(5)$ is feasible with $z^*=z_{\max}$, then $\tfrac1{v^\sigma}Y^\sigma \in \mathrm{conv}(P_5)$ for all $\sigma \in \mathcal{S}_5$, and consequently $\left(\tfrac1{v^\sigma}Y^\sigma, \mathrm{ind}(e_\sigma)\right) \in \mathrm{conv}(P_6)$ for all $\sigma \in \mathcal{S}_5$, so that $X/6 \in \mathrm{conv}(P_6)$.
\end{lemma}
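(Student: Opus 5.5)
The proof has three steps. First, Lemma~\ref{lem:y-in-MI} puts each normalised commodity vector in the MI-relaxation at $k=5$. Second, at $k=5$ that relaxation point is identified with a convex combination of pedigrees read off the two-layer network $N_4$. Third, the Packability Corollary (Theorem~\ref{thm:packability}) and a weighted sum over commodities lift this to $P_6$.

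For the first step, fix $\sigma \in \mathcal{S}_5$, designated by the arc $a=([5:e'_\sigma],[6:e_\sigma])$ of $F_5$. Since $\mathrm{MCF}(5)$ is feasible with $z^*=z_{\max}$, Lemma~\ref{lem:y-in-MI} gives $\tfrac{1}{v^\sigma}Y^\sigma \in P_{MI}(5)$.

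For the second step, I would exploit the structure of $N_4$. The commodity $\sigma$ is routed in the restricted network $N_4(\sigma)$, which has only layers $V_{[1]}$ and $V_{[2]}$, and its unique sink is $[5:e'_\sigma]$. So every unit of flow of $\sigma$ travels along a path $[4:e_\alpha]\to[5:e'_\sigma]$ with $e_\alpha \in G(e'_\sigma)$, or through a shrunk rigid pedigree of $R_4$. Decomposing the flow of $\sigma$ along these arcs writes $\tfrac1{v^\sigma}Y^\sigma$ as a convex combination of characteristic vectors of pairs $(e_\alpha,e'_\sigma)$. Each pair is a genuine pedigree in $P_5$: the generator condition holds because the arc exists in $N_4$ only when $e_\alpha \in G(e'_\sigma)$ (edges of $E_3$ having the base generator), and distinctness holds because $e_\alpha \in E_3$ is a generator of $e'_\sigma$ and so differs from it. Alternatively, one could use the known fact that $P_{MI}(5)=\mathrm{conv}(P_5)$, since at $n=5$ the relaxation is a transportation polytope with integral vertices. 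The direct decomposition, however, avoids citing that fact.

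For the third step, every pedigree in this decomposition ends in $e'_\sigma$, and the restricted network deleted all nodes incompatible with $e_\sigma$. Hence each pedigree contains a generator of $e_\sigma$ and does not already use $e_\sigma$ as a common edge. Appending the block $\mathrm{ind}(e_\sigma)$ puts $\tfrac1{v^\sigma}Y^\sigma$ in the situation of Theorem~\ref{thm:packability} with $x_6(e_\sigma)=1$, the hypotheses \texttt{h\_gen} and \texttt{h\_ne} holding for every active pedigree. That theorem gives $(\tfrac1{v^\sigma}Y^\sigma,\mathrm{ind}(e_\sigma)) \in \mathrm{conv}(P_6)$.

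To conclude $X/6 \in \mathrm{conv}(P_6)$, I would write
\[
X/6=\sum_{P\in R_5}\mu_P\chi^P+\sum_{\sigma\in\mathcal{S}_5} v^\sigma\left(\tfrac1{v^\sigma}Y^\sigma,\mathrm{ind}(e_\sigma)\right),
\]
where the weights sum to $\sum\mu_P+z^*=1$ because $z^*=z_{\max}$. The main obstacle is verifying this identity coordinatewise. Each layer-$6$ coordinate $x_6(e)$ must be exactly covered by the rigid pedigrees plus the commodities designated by arcs into $[6:e]$; this follows from the flow of $F_5$ into each sink. Each coordinate at layers $4$ and $5$ must equal the rigid weights plus the commodity flows through the node; this needs the MCF node-capacity constraints to hold with equality, which I would derive from $z^*=z_{\max}$ by a counting argument: total capacity at each layer is $1$, and the flow meets it.
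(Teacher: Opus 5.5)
Your argument is essentially the paper's. The flow of $\sigma$ in the bipartite restricted network $N_4(\sigma)$ splits into arcs $[4:e_\alpha]\to[5:e'_\sigma]$, each already a pedigree in $P_5$ that extends by $e_\sigma$ to a pedigree in $P_6$, and the weighted sum recovers $X/6$. Your explicit $R_5$ term and your tightness argument from $z^*=z_{\max}$ spell out what the paper's sketch simply asserts; the appeal to Lemma~\ref{lem:y-in-MI} is not needed.

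Drop the aside that $P_{MI}(5)=\mathrm{conv}(P_5)$, because it is false. Take $x_4=(\tfrac12,\tfrac14,\tfrac14)$ on $(1,2),(1,3),(2,3)$ with $x_5(1,2)=x_5(3,4)=\tfrac12$. This point satisfies \eqref{eq:layer}--\eqref{eq:generatorslack}. Yet every pedigree whose stage-$5$ edge is $(1,2)$ or $(3,4)$ has stage-$4$ edge $(1,3)$ or $(2,3)$, which forces $x_4(1,2)=0$. This is exactly why $F_4$ can be infeasible for points of $P_{MI}(n)$.
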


\begin{proof}[Sketch]
Here $z_{\max} = 1 - \sum_{P \in R_4}\mu_P$. For any $\sigma \in \mathcal{S}_5$, every commodity flow path in $N_4(\sigma)$ corresponds to an arc of $N_4$ and is therefore already a pedigree path in $P_5$, giving the first claim. Each such path ends at $\mathrm{tail}(a)$ and extends, via $\mathrm{head}(a)$, to a pedigree in $P_6$; since $\sum_{\sigma \in \mathcal{S}_5} v^\sigma\left(\tfrac1{v^\sigma}Y^\sigma,\mathrm{ind}(e_\sigma)\right) = X/6$, the second claim follows.
\end{proof}

\begin{lemma}\label{lem:y-in-conv}
If $\mathrm{MCF}(k)$ is feasible with $z^*=z_{\max}$, then $\tfrac1{v^s}(Y^s/k) \in \mathrm{conv}(P_k)$ for every $s \in \mathcal{S}_k$.
\end{lemma}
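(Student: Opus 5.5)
The plan is to argue by induction on $k$, with Lemma~\ref{lem:y-in-conv-base} as the base case $k=5$ (indeed $k=6$ for the extended statement). The induction hypothesis is the following statement for $k-1$. If $\mathrm{MCF}(k-1)$ is feasible with $z^*=z_{\max}$, then for every commodity $\sigma\in\mathcal{S}_{k-1}$ we have $\tfrac1{v^\sigma}(Y^\sigma/(k-1))\in\mathrm{conv}(P_{k-1})$, and the extension by the head edge of its designating arc satisfies $\bigl(\tfrac1{v^\sigma}Y^\sigma/(k-1),\mathrm{ind}(e_\sigma)\bigr)\in\mathrm{conv}(P_k)$.

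Fix $s\in\mathcal{S}_k$, designated by the arc $a=([k:e_\alpha],[k+1:e_\beta])$ of $F_k$. The first step is to decompose the flow of commodity $s$ in its restricted network $N_{k-1}(s)$ according to the last arc used before the sink $[k:e_\alpha]$. Each such last arc is an arc of $F_{k-1}$, so it designates a commodity $\sigma\in\mathcal{S}_{k-1}$; otherwise it comes from a shrunk rigid pedigree $P\in R_{k-1}$. This writes
\[
Y^s/k=\sum_{\sigma} w_\sigma\,\bigl(\tfrac1{v^\sigma}Y^\sigma/(k-1),\mathrm{ind}(e_\sigma)\bigr)+\sum_{P\in R_{k-1}} w_P\,\chi^{P},
\]
with nonnegative weights $w_\sigma,w_P$ summing to $v^s$ by flow conservation at the sink. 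The induction hypothesis is to be applied to the restriction of the solution of $\mathrm{MCF}(k)$ to the commodities of $\mathcal{S}_{k-1}$ (the $\mathrm{MCF}(k-1)$ part). Every extended term in this combination then lies in $\mathrm{conv}(P_k)$, and every rigid term is itself a pedigree in $P_k$. Dividing by $v^s$ gives $\tfrac1{v^s}(Y^s/k)\in\mathrm{conv}(P_k)$.

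To make this legitimate, two points need checking. First, the restriction of an optimal $\mathrm{MCF}(k)$ solution must itself be feasible for $\mathrm{MCF}(k-1)$ and attain its $z_{\max}$. I would argue this from the nested structure of Definition~\ref{def:commodities}: the constraints of $\mathrm{MCF}(k)$ contain those of $\mathrm{MCF}(k-1)$. Moreover, $z^*=z_{\max}$ forces all remaining capacity of layer $k$ to be routed, which forces the flows of layer $k-1$ to saturate as well. Second, the portion of commodity $s$ that uses the arc designating $\sigma$ must be a sub-flow of the commodity-$\sigma$ flow, so that its normalised node vector is the same point $\tfrac1{v^\sigma}Y^\sigma$ or a point in the same convex hull. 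Here I would use Lemma~\ref{lem:y-in-MI}, which places each normalised slice in $P_{MI}(k)$, together with the deletion rules (i)--(iii) defining $N_{k-1}(L)$. These rules guarantee that every path surviving in the restricted network is a valid pedigree path, so each extension by $e_\sigma$ satisfies the generator and distinctness conditions. Theorem~\ref{thm:packability} then lifts membership one layer.

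I expect the main obstacle to be this second point, namely identifying the commodity-$s$ flow entering through the arc of $\sigma$ with a scaled copy of $Y^\sigma$ (or a convex combination of pedigrees contained in the support of $Y^\sigma$). Commodities share node capacities, so a priori the sub-flow of $s$ could mix paths in a way that differs from $\sigma$'s own decomposition. My first attempt would be to avoid any identification. Instead, I would decompose the commodity-$s$ flow directly into paths in $N_{k-1}(s)$ and show by induction on layers that each path, together with its rigid-pedigree prefixes, is a pedigree path. This uses the restricted-network deletions for the generator condition and the fact that a common edge cannot be repeated in a restricted network. It would give $\mathrm{conv}(P_k)$ membership by path decomposition, with the induction hypothesis needed only to certify that the rigid prefixes are pedigrees.
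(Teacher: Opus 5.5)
Neither of your two routes gives a proof as it stands. The identity $Y^s/k=\sum_\sigma w_\sigma\bigl(\tfrac1{v^\sigma}Y^\sigma/(k-1),\mathrm{ind}(e_\sigma)\bigr)+\cdots$ does not follow from $\mathrm{MCF}(k)$. The constraints couple commodities only through aggregate arc totals, via \eqref{mcf:total} and \eqref{mcf:vs}. For an arc $a'\leftrightarrow\sigma$ entering $[k:e_\alpha]$, the only link between $s$ and $\sigma$ is $\sum_{s\in\mathcal{S}_k}f^s_{a'}=f_{a'}=v^\sigma$; no constraint ties an individual $f^s$ to an individual $f^\sigma$. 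Upstream of $a'$, commodity $s$ is routed in $N_{k-1}(s)$ independently of how $\sigma$ is routed in $N_{k-2}(\sigma)$. So ``the part of $s$ that uses $a'$'' is not even defined until you fix a path decomposition, and nothing makes it proportional to $Y^\sigma$ or supported on pedigrees representing $Y^\sigma$. The induction hypothesis about $\sigma$ therefore says nothing about it. (Your claim that the restriction to $\bigcup_{l<k}\mathcal{S}_l$ is an optimal $\mathrm{MCF}(k-1)$ solution is also only asserted: $\bar x$, the rigid sets, and hence $z_{\max}$ all change between stages.)

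Your fallback needs every source-to-sink path of $N_{k-1}(s)$ to be a pedigree path, and the deletions (i)--(iii) do not give this. They test only the two edges of the designating link, then cascade. An intermediate arc of $N_{k-1}$, by contrast, is a link with $C(L')>0$: it is certified by \emph{some} supporting path, not by every path reaching its tail. So both the generator and the distinctness conditions can fail along a path. Already in Example~\ref{ex:n4-infeasible}, $N_5$ contains $[4:(1,3)]\to[5:(3,4)]\to[6:(2,4)]$. The second arc is present because it is supported through $[4:(2,3)]$, yet $(2,4)$ has no generator on this path. Such a segment need not be deleted when it lies before the sink layer of a later restricted network.

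The paper's proof avoids both devices. It works inside the single $\mathrm{MCF}(k)$ solution, never splits commodity $s$, and never argues path by path. Set $W=\tfrac1{v^s}Y^s$.
\begin{itemize}
\item Lemma~\ref{lem:y-in-MI}, which is where $z^*=z_{\max}$ enters, gives $W\in P_{MI}(k)$.
\item Since $[k:e_\alpha]$ is the only layer-$k$ node of $N_{k-1}(s)$, $y^s_k(e_\alpha)=v^s$, so the layer-$k$ block of $W$ is $\mathrm{ind}(e_\alpha)$.
\item The Packability Corollary (Theorem~\ref{thm:packability}) then lifts $W/(k-1)\in\mathrm{conv}(P_{k-1})$ to $W\in\mathrm{conv}(P_k)$, with the induction on the layer starting from Lemma~\ref{lem:y-in-conv-base}.
\end{itemize}
The Corollary's side conditions come from $P_{MI}$, not from the deletion rules. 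For $e_\alpha=(i,j)$ with $j>3$, constraint \eqref{eq:generatorslack} together with $\tfrac1{v^s}y^s_k(e_\alpha)=1$ forces all layer-$j$ mass of $W$ onto generators of $e_\alpha$. It also forces zero mass on $e_\alpha$ at layers $j+1,\dots,k-1$; for $j\le 3$, \eqref{eq:e3slack} gives this second fact. Hence every pedigree in any convex representation of $W/(k-1)$ extends by $e_\alpha$.

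You mention both lemmas, but only as auxiliary checks inside the $\sigma$-decomposition; in the paper they are the whole argument. Be aware, though, that the paper's sketch is brief about the inductive input $W/(k-1)\in\mathrm{conv}(P_{k-1})$. For $l<k$ the layer-$l$ block of $Y^s$ is in general not an indicator, so the Corollary does not apply there verbatim. That is precisely the obstacle you identified, and a complete argument has to supply it; neither of your devices, as stated, does.
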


\begin{proof}[Sketch]
By Lemma~\ref{lem:y-in-conv-base} (and the analogous statement for general $k>5$ at layer $5$), $\tfrac1{v^s}(Y^s/5) \in \mathrm{conv}(P_5)$ for every $s \in \mathcal{S}_k$. Proceed by induction on $l$: assuming $\tfrac1{v^s}(Y^s/(l-1)) \in \mathrm{conv}(P_{l-1})$, Lemma~\ref{lem:y-in-MI} gives $\tfrac1{v^s}(Y^s/l) \in P_{MI}(l)$; since commodity $s$ flows only through arcs ending at $u=[l:e']$, we have $y^s_l(e')=v^s$, so $\tfrac1{v^s}(Y^s/l)$ has last component the indicator of $e'$. The Packability Corollary (Theorem~\ref{thm:packability}) then upgrades $P_{MI}(l)$-membership to $\mathrm{conv}(P_l)$-membership.
\end{proof}

\begin{leancert}
\texttt{N\_YisinConv.lean} \textbar\ \texttt{theorem Y\_s\_in\_conv}: if $\mathrm{MCF}(k)$ is feasible with $z^*=z_{\max}$, then $\tfrac1{v^s}(Y^s/k) \in \mathrm{conv}(P_k)$ for every $s \in \mathcal{S}_k$ (book, Chapter~5, Lemma~5.8).
\end{leancert}

\begin{theorem}[Sufficiency]\label{thm:sufficiency}
If $\mathrm{MCF}(k)$ is feasible with $z^*=z_{\max}$, then $X/(k+1) \in \mathrm{conv}(P_{k+1})$.
\end{theorem}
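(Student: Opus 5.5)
The plan is to assemble $X/(k+1)$ as an explicit convex combination of two kinds of pieces. The first kind consists of the rigid pedigrees in $R_k$, which already lie in $P_{k+1}$ and carry weights $\mu_P$. The second kind consists of the normalised commodity vectors, one per commodity $s \in \mathcal{S}_k$, each extended by the indicator of the head of its designating arc. This is the pattern of Lemma~\ref{lem:y-in-conv-base} (the case $k=5$), now carried out for general $k$ using Lemma~\ref{lem:y-in-conv}.

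\textbf{Step 1: each commodity vector extends to $\mathrm{conv}(P_{k+1})$.} Fix $s \in \mathcal{S}_k$, designated by the arc $a=([k:e'_s],[k+1:e_s])$ of $F_k$, and assume $v^s>0$ (commodities with $v^s=0$ contribute nothing and are dropped).
\begin{itemize}
\item Lemma~\ref{lem:y-in-conv} gives $\tfrac1{v^s}(Y^s/k) \in \mathrm{conv}(P_k)$.
\item Form $W^s := \bigl(\tfrac1{v^s}(Y^s/k), \mathrm{ind}(e_s)\bigr)$. It has $w_{k+1}(e_s)=1$.
\item $W^s \in P_{MI}(k+1)$. Arguing as in Lemma~\ref{lem:y-in-MI}, commodity $s$ is routed only within $N_{k-1}(s)$, and the restricted network keeps only nodes compatible with the link. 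So a generator of $e_s$ carries all of the flow, and $e_s$ is never an earlier common edge on the paths of commodity $s$.
\item Theorem~\ref{thm:packability}, applied with $n=k+1$ and $e'=e_s$, then gives $W^s \in \mathrm{conv}(P_{k+1})$.
\end{itemize}
The Lean form of Theorem~\ref{thm:packability} takes the generator condition and the not-yet-used condition as hypotheses \texttt{h\_gen} and \texttt{h\_ne}. These must be discharged for each pedigree in a convex witness of $\tfrac1{v^s}(Y^s/k)$. I would derive them from the deletion rules (i)--(iii) of $N_{k-1}(L)$: every node on a surviving path is a generator-compatible node, and every rigid pedigree that survives avoids $e_s$.

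\textbf{Step 2: the weights sum to $1$.} Optimality with $z^*=z_{\max}$ gives
\[
\sum_{s \in \mathcal{S}_k} v^s \;=\; 1-\sum_{P\in R_k}\mu_P .
\]
Hence the combination
\[
\sum_{P\in R_k}\mu_P\,\chi^P+\sum_{s\in\mathcal{S}_k} v^s W^s
\]
has nonnegative weights summing to $1$. Since $\mathrm{conv}(P_{k+1})$ is convex and each $W^s$ lies in it by Step~1, the combination lies in $\mathrm{conv}(P_{k+1})$.

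\textbf{Step 3: the combination equals $X/(k+1)$.} I would check this coordinatewise.
\begin{itemize}
\item For a node $[l:e]$ with $l\le k$, the total commodity flow through it plus the rigid-pedigree contributions equals $x_l(e)$. This follows from the node-capacity constraints of $\mathrm{MCF}(k)$ with updated capacities $\bar x(v)$, together with $z^*=z_{\max}$, which forces every capacity to be saturated.
\item For layer $k+1$, the coordinate at $e$ collects $v^s$ from every $s$ whose arc has head $[k+1:e]$, plus the rigid contributions. This sum equals $x_{k+1}(e)$ by feasibility of $F_k$ at the sinks.
\end{itemize}

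\textbf{Main obstacle.} I expect the hard step to be this bookkeeping in Step~3: showing that saturation $z^*=z_{\max}$ forces every node capacity to be exactly exhausted, with no double counting between flow routed through rigid paths (as included in Definition~\ref{def:Ys}) and the separate $\mu_P$ terms. I would first try to prove a conservation identity of the form $\sum_s y^s_l(e) + \sum_{P\ni[l:e]}\mu_P = x_l(e)$. The approach is induction on the construction stages, using the fact that each capacity update in building $N_l$ subtracts exactly the rigid flow removed at that stage.
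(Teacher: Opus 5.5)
Your proposal is correct and follows essentially the paper's route: you apply Lemma~\ref{lem:y-in-conv} to each commodity, extend every pedigree in its witness by $e_s$, give the pieces weights $v^s$ summing to $z_{\max}$, and let the rigid pedigrees of $R_k$ supply the remaining mass $1-z_{\max}$. The extension step you package through Theorem~\ref{thm:packability}, where the paper extends directly; your coordinatewise check is more explicit than the paper's sketch, which simply asserts $\sum_s v^s\bigl(\tfrac1{v^s}(Y^s,\mathrm{ind}(e_s))\bigr)=\overline{X}_{k+1}$. One small correction: at layer $k+1$ the equality comes from the sink constraint~\eqref{mcf:sinkcap} together with $z^*=z_{\max}$ (the same saturation argument you use for $l\le k$), not from feasibility of $F_k$, because the FAT flow of $F_k$ is a different flow from the MCF one.
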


\begin{proof}[Sketch]
Let $a = ([k:e'_s],[k+1:e_s]) \leftrightarrow s$. By Lemma~\ref{lem:y-in-conv}, $\tfrac1{v^s}(Y^s/k) \in \mathrm{conv}(P_k)$, so there exist $\gamma_P > 0$ summing to $1$ with $\tfrac1{v^s}(Y^s/k) = \sum_{P \in \mathcal P_s} \gamma_P X_P$, each $P$ ending in $e'_s$. Each such $P$ extends, using $e_s$, to a pedigree in $P_{k+1}$ with weight $v^s\gamma_P$; these weights sum to $v^s$. Summing over $s \in \mathcal S_k$ gives $\sum_s v^s\left(\tfrac1{v^s}(Y^s,\mathrm{ind}(e_s))\right) = \overline{X}_{k+1}$; adding the contribution $\sum_{P \in R_k}\mu_P X_P$ of the rigid pedigrees (which sums with the previous term, since $\sum_s v^s = z_{\max} = 1 - \sum_{P\in R_k}\mu_P$) yields $X/(k+1) \in \mathrm{conv}(P_{k+1})$.
\end{proof}

\begin{leancert}
\texttt{N\_Sufficiency.lean} \textbar\ \texttt{theorem sufficiency}: $\mathrm{MCF}(k)$ feasible, $z^*=z_{\max} \Rightarrow X/(k+1) \in \mathrm{conv}(P_{k+1})$ (book, Chapter~5, Theorem~5.4).
\end{leancert}

\begin{theorem}[Necessity]\label{thm:necessity}
If $X/(k+1) \in \mathrm{conv}(P_{k+1})$ then $\mathrm{MCF}(k)$ has a feasible solution with $z^*=z_{\max}$.
\end{theorem}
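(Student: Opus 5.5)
We construct the required $\mathrm{MCF}(k)$ solution explicitly from a convex witness, in the same spirit as the \emph{instant flow} argument behind Theorem~\ref{thm:nonmembership}. Fix $\lambda \in \Lambda_{k+1}(X)$, so that
\[
X/(k+1)=\sum_{P\in I(\lambda)}\lambda_P X_P,\qquad \lambda_P>0,\qquad \sum_P\lambda_P=1 .
\]
For each $l\le k$, the truncations $P/l$ (Lemma~\ref{lem:truncation}) give the induced weight vector witnessing $X/l\in\mathrm{conv}(P_l)$.

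The plan is to split the active pedigrees into two classes.
\begin{itemize}
\item \emph{Rigid pedigrees.} These are the $P$ whose truncation $P/(k+1)$ is accounted for by a rigid pedigree of $R_k$. Rigidity of the arcs of $F_k$ (and inductively of $F_l$) means that every feasible flow, in particular the instant flow of $\lambda$, puts exactly $\mu_Q$ on the arc carrying $Q\in R_k$. So these pedigrees absorb total weight exactly $\sum_{Q\in R_k}\mu_Q$.
\item \emph{Remaining pedigrees.} By Lemma~A.3 of the appendix, each remaining $P$ has $P/k$ realised as an available path in the restricted network $N_{k-1}(L)$ for the link $L=(e_k(P),e_{k+1}(P))$, and recursively at earlier layers.
\end{itemize}

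For each arc $a\leftrightarrow s$ of $F_l$, $5\le l\le k$, we define the commodity-$s$ flow on every arc and node of $N_{l-1}(s)$ as the sum of $\lambda_P$ over remaining pedigrees whose path uses that arc/node and whose layer-$(l,l+1)$ pair is the link of $a$. We then set $v^s$ to be the total of these $\lambda_P$.

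The verification proceeds in the following order.
\begin{enumerate}
\item \emph{Flow conservation and commodity routing.} Each commodity is a sum of path flows of genuine pedigree paths, so conservation holds automatically. Each path is routed inside $N_{l-1}(s)$ because it is a valid pedigree path and hence survives the deletion rules (i)--(iii).
\item \emph{Node capacities.} The total flow of all commodities through $[l:e]$ equals $x_l(e)$ minus the rigid contributions, since each pedigree passes through exactly one node per layer. This equals the updated capacity $\bar x(v)$, by the same bookkeeping used to update capacities after removing rigid weights.
\item \emph{Arc capacities and availability bounds.} Summing over pedigrees through a link gives at most $C(L)$, because the instant flow is a feasible flow into the sink of $N_{k-1}(L)$ and $C(L)$ is its maximum. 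The same argument gives the availability bounds $\mu_P$ at shrunk nodes.
\item \emph{Objective value.} Finally,
\[
z=\sum_{s\in\mathcal S_k}v^s=1-\sum_{Q\in R_k}\mu_Q=z_{\max}.
\]
Since $z_{\max}$ is an upper bound on the objective (every unit of flow into $F_k$ arcs consumes layer-$(k+1)$ capacity not already used by $R_k$), the optimum satisfies $z^*=z_{\max}$.
\end{enumerate}

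The main obstacle is the consistency between the rigid pedigrees and the convex witness: I must show that the pedigrees of $\lambda$ account for each $Q\in R_k$ with weight exactly $\mu_Q$ and nothing else. Otherwise the residual weights would not match $\bar x$ and the shrunk-node availabilities.

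I would handle this by induction on $l$ from $4$ to $k$. At each stage the instant flow of $\lambda$ is feasible for $F_l$ (Lemma~A.2), so its value on any rigid arc equals the frozen value. In case (ii), membership of $P$ in the rigid class is determined by its path through the shrunk node. In case (i), uniqueness of $P_{\mathrm{unique}}(L)$ forces every pedigree through a rigid link to coincide with the recorded path, since any other realising path would give a second max-flow path. This uniqueness step is where I expect the real work to lie.
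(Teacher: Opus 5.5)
Your proposal is correct and follows essentially the paper's route: fix $\lambda\in\Lambda_{k+1}(X)$ and build the instant flow from it, obtain conservation from the path decomposition, obtain the arc, node and availability bounds from the same instant-flow argument as in Theorem~\ref{thm:nonmembership}, and read off $\sum_{s\in\mathcal S_k}v^s=1-\sum_{P\in R_k}\mu_P=z_{\max}$. Your induction showing that $\lambda$ places weight exactly $\mu_Q$ on each rigid $Q$ is left implicit in the paper, which folds it into Lemma~A.3 and the instant-flow argument; making it explicit is an elaboration of the same proof, not a different route.
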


\begin{proof}[Sketch]
Choose $\lambda \in \Lambda_{k+1}(X)$ and define $f_a = \sum_{r \in I(\lambda),\, X^r \| a} \lambda_r$ for $a \in N_k$, where $X^r \| a$ means that the pedigree $X^r$ agrees with the arc $a$, and $f^s_a$ analogously restricted to $N_{l-1}(s)$. Non-negativity holds since $\lambda_r>0$; flow conservation for commodity $s$ at any intermediate node follows because the pedigrees $r \in I_s(\lambda)$ with $x^r_l(e)=1$ are exactly those contributing to both the in-flow and the out-flow at that node. Arc and node capacities are met by the same instant-flow argument used in Theorem~\ref{thm:nonmembership}. At $l=k$: $\sum_{s \in \mathcal S_k} v^s = 1 - \sum_{P \in R_k}\mu_P = z_{\max}$, so $f,\{f^s\}$ is feasible with objective $z_{\max}$.
\end{proof}

\begin{leancert}
\texttt{VN\_NecessityProof.lean} \textbar\ \texttt{theorem VN\_imptheorem\_zstar\_eq\_zmax}: $X/(k+1) \in \mathrm{conv}(P_{k+1}) \Rightarrow \mathrm{MCF}(k)$ feasible (book, Chapter~5, Theorem~5.3 --- distinct from Lemma~5.3 of \texttt{VN\_Oflow.lean}, described in Section~\ref{sec:lean4} below), assembled from machine-verified results building through \texttt{VN\_Oflow.lean} and \texttt{VN\_Path5.lean} (Lemmas~5.3--5.4), \texttt{VN\_Pedpath.lean} (Lemma~5.5), \texttt{VN\_Infeasibility.lean} (the Infeasibility Theorem), and \texttt{VN\_Imptheorem.lean}, all with zero \texttt{sorry}s.
\end{leancert}

Together, Theorems~\ref{thm:sufficiency} and \ref{thm:necessity} give the main result of this paper.

\begin{theorem}[Necessary and Sufficient Condition for Membership]\label{thm:main-ns}
Given $n>5$, $X \in P_{MI}(n)$, and $X/(n-1) \in \mathrm{conv}(P_{n-1})$:
\[
X \in \mathrm{conv}(P_n) \iff \mathrm{MCF}(n-1) \text{ has a feasible solution with } z^*=z_{\max}.
\]
\end{theorem}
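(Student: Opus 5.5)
The plan is to obtain Theorem~\ref{thm:main-ns} by specialising the two stage-wise implications, Theorem~\ref{thm:sufficiency} and Theorem~\ref{thm:necessity}, to $k=n-1$. Before doing so, I would check that the hypotheses of the statement make $\mathrm{MCF}(n-1)$ well-defined, that is, that $(N_{n-1},R_{n-1},\mu)$ has been constructed.

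\textbf{Well-definedness of $\mathrm{MCF}(n-1)$.} Since $X \in P_{MI}(n)$ and $X/(n-1) \in \mathrm{conv}(P_{n-1})$, the recursive construction of Section~\ref{sec:layered-network} has succeeded through stage $n-2$. In particular, $(N_{n-2},R_{n-2},\mu)$ is well-defined. I would argue this by induction on the stage $l$, as follows.
\begin{itemize}
\item Truncating a convex witness for $X/(n-1)$ gives a convex witness for $X/(l+1)$ (Lemma~\ref{lem:truncation}), so $X/(l+1)\in\mathrm{conv}(P_{l+1})$.
\item By the contrapositive of Theorem~\ref{thm:nonmembership}, each $F_l$ for $l\le n-2$ is feasible. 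Hence each triple $(N_l,R_l,\mu)$ is built.
\end{itemize}
The stage-$(n-1)$ problem $F_{n-1}$, and hence $N_{n-1}$, $R_{n-1}$ and $z_{\max}=1-\sum_{P\in R_{n-1}}\mu_P$, are then defined from this data. The hypothesis $n>5$ puts $k=n-1\ge5$, so the commodity sets $\mathcal S_5,\dots,\mathcal S_{n-1}$ of Definition~\ref{def:commodities} are nonempty in the intended sense. It also means the base case Lemma~\ref{lem:y-in-conv-base} used inside Lemma~\ref{lem:y-in-conv} applies.

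\textbf{($\Leftarrow$).} Assume $\mathrm{MCF}(n-1)$ is feasible with $z^*=z_{\max}$. Theorem~\ref{thm:sufficiency} with $k=n-1$ gives $X/n\in\mathrm{conv}(P_n)$, and $X/n=X$ since $X$ has exactly layers $4,\dots,n$.

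\textbf{($\Rightarrow$).} Assume $X\in\mathrm{conv}(P_n)$, i.e.\ $X/n\in\mathrm{conv}(P_n)$.
\begin{itemize}
\item First, $F_{n-1}$ is feasible, since otherwise Theorem~\ref{thm:nonmembership} would contradict membership. So $\mathrm{MCF}(n-1)$ is actually posed.
\item Theorem~\ref{thm:necessity} with $k=n-1$ then yields a feasible solution with objective value $z_{\max}$.
\item Finally, $z_{\max}$ is an upper bound on the objective, because the total flow into arcs of $F_{n-1}$ cannot exceed the unit mass minus the frozen rigid weight. Therefore the optimum satisfies $z^*=z_{\max}$.
\end{itemize}

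\textbf{Main obstacle.} The main point needing care is this upper bound $z^*\le z_{\max}$, together with the bookkeeping that the instance of $\mathrm{MCF}(n-1)$ in both directions is the same object, built from the same $(N_{n-1},R_{n-1},\mu)$. To prove the bound, I would sum the layer-$(n)$ demand constraints of $F_{n-1}$: the arc flows plus the rigid contributions $\delta_{e'}$ equal $\sum_{e'}x_n(e')=1$. The commodity values $v^s$ are bounded by the arc flows, so $z=\sum_s v^s\le 1-\sum_{P\in R_{n-1}}\mu_P$.
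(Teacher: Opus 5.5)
Your proposal is correct and takes the same route as the paper, which obtains Theorem~\ref{thm:main-ns} simply by combining Theorems~\ref{thm:sufficiency} and~\ref{thm:necessity} at $k=n-1$; your well-definedness bookkeeping (truncation via Lemma~\ref{lem:truncation} plus the contrapositive of Theorem~\ref{thm:nonmembership}) only makes explicit what the paper leaves implicit. One small correction: the bound $z^*\le z_{\max}$, which the paper just asserts in Appendix~\ref{sec:appendix-mcf}, is not load-bearing, since Theorem~\ref{thm:necessity} already delivers $z^*=z_{\max}$. If you do prove it, read it off the MCF's own sink-capacity constraint \eqref{mcf:sinkcap} at $l=n-1$, summed over the layer-$n$ sinks whose updated capacities $\bar x$ total $1-\sum_{P\in R_{n-1}}\mu_P$, rather than from the demand equalities of the FAT problem $F_{n-1}$, which are not constraints on the MCF variables.
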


\begin{leancert}
$X \in \mathrm{conv}(P_n) \iff \mathrm{MCF}(n-1)$ feasible follows directly from the two separately machine-verified implications above: sufficiency (Theorem~\ref{thm:sufficiency}, \texttt{N\_Sufficiency.lean}) and necessity (Theorem~\ref{thm:necessity}, \texttt{VN\_NecessityProof.lean} and its dependency chain), each with zero unresolved \texttt{sorry}s.
\end{leancert}

\section{Computational Complexity}\label{sec:complexity}

The results in this section are from the book \cite{arthanari2023book}; the cardinality bound is proved below directly from the rigidity of the pedigrees in $R_{k-1}$. The size of $R_k$ becomes important to study, as the rigid pedigrees in $R_k$ become origin nodes, determining the number of origins in $F_{k+1}$. An a priori exponential-looking family turns out to be polynomially bounded, which is the key fact making the whole framework tractable.

\subsection{Cardinality of the Rigid Pedigrees}

\begin{theorem}[Cardinality Bound]\label{thm:cardinality}
$|R_{k-1}| \leq \tau_k - k + 4$.
\end{theorem}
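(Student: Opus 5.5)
The plan is to bound $|R_{k-1}|$ by the dimension of the affine space in which the characteristic vectors of the rigid pedigrees live, and then show that these vectors are affinely independent. Each $P \in R_{k-1}$ is a pedigree in $P_k$, so its vector $X_P$ lies in $\mathbb{B}^{\tau_k}$. It also satisfies the $k-3$ layer equalities \eqref{eq:layer} for layers $4,\dots,k$. Together with the layer-$3$ coordinate, these equalities cut out an affine subspace of dimension $\tau_k-(k-3)$; this is exactly the dimension of $\mathrm{conv}(P_k)$ recalled in the Introduction. An affinely independent family in a $d$-dimensional affine space has at most $d+1$ members, so affine independence of $\{X_P : P \in R_{k-1}\}$ would give
\[
|R_{k-1}| \leq \tau_k-(k-3)+1 = \tau_k-k+4,
\]
which is the claimed bound.

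The substance is therefore to prove affine independence from rigidity, and I would argue by contradiction. Suppose there is a nontrivial relation $\sum_{P} \theta_P X_P = 0$ with $\sum_P \theta_P = 0$, taken over $R_{k-1}$. Consider the perturbation $\mu_P \mapsto \mu_P + \epsilon\theta_P$.

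1. For small $|\epsilon|$, all weights stay positive, since every $\mu_P>0$ (dummy arcs were removed).
2. The aggregate $\sum_P \mu_P X_P$ is unchanged, so every node of the layered network receives the same total rigid contribution. The residual capacities $\bar x$ feeding the non-rigid part of the network are therefore untouched.
3. Consequently the perturbed weights, each pushed along the unique path of its pedigree, together with the unchanged remaining flows, form another feasible flow of the relevant FAT problems $F_4,\dots,F_{k-1}$.

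Since some $\theta_P\neq 0$, this feasible flow changes the flow on the arc through which that $P$ entered $R_{k-1}$. That contradicts the definition of a rigid arc (Definition~\ref{def:rigid}) in the FAT problem where $P$ was created, whether via route (i), a rigid link with unique path, or route (ii), the extension of a shrunk node.

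The main obstacle is step~3. Rigidity is defined stage by stage, relative to the FAT problem $F_l$ at which $P$ became rigid and to the capacities $C(L)$ of the restricted networks at that stage, not globally over one flow model. The perturbation must therefore be shown to induce a feasible solution of each $F_l$ in turn. I would handle this by induction on the stage $l$ at which pedigrees join $R_{l}$, restricting $X_P$ to layers up to $l+1$. The relation restricted to pedigrees already rigid by stage $l$ still sums to zero in aggregate flow at every node of layers $\leq l+1$, because truncation is coordinate projection (Lemma~\ref{lem:truncation}). The induced change in arc flows of $F_l$ is then a circulation in the flow-change graph $G_f$ of Gusfield's characterisation.

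One case needs separate care: a shrunk pedigree node whose weight is perturbed but which has a single outgoing arc. There I would use that the $\theta$-relation also balances the layer-$(l+1)$ coordinates, so the change on that arc is compensated by another pedigree sharing the same head. This yields a flow-change cycle through a rigid arc, contradicting its rigidity. If this inductive bookkeeping proves too delicate, the fallback is to establish affine independence directly by ordering $R_{k-1}$ by stage of creation and showing that each newly rigid pedigree uses a (link, sink) pair whose vector lies outside the affine hull of the earlier ones.
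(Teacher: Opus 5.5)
Your skeleton matches the paper's proof. You bound $|R_{k-1}|$ by one more than the dimension; your count via the $k-3$ layer equalities is fine and needs only an upper bound. You then get affine independence by perturbing the rigid weights, $\mu_P\mapsto\mu_P+\varepsilon\theta_P$. The gap is step~3, and it does not follow from step~2. The relation $\sum_P\theta_PX_P=0$ controls only coordinate (node) totals. Feasibility of $F_l$, however, is governed by the supply at each individual origin and by path-level capacities.

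Consider a pedigree of $R_{k-1}$ created by route~(i) from a link out of $[k-1:e_\alpha]$, and one created by route~(ii) from a shrunk node $Q\in R_{k-2}$ whose last edge is $e_\alpha$. They have the same layer-$(k-1)$ coordinate but draw on different origins of $F_{k-1}$. Nothing in the relation stops $\theta$ from shifting weight between them. Such a shift leaves every coordinate intact yet violates the supplies at both $[k-1:e_\alpha]$ and $Q$. Likewise, raising the flow on a rigid link can exceed its capacity $C(L)$, which node totals do not see. So the induced change need not be a circulation in $G_f$. Your special-case repair (compensating at the common head) balances the sink side but not the supply at $Q$. As written, you have no feasible perturbed flow, hence no contradiction with arc rigidity. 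The fallback ordering argument is not yet an argument either: all of $R_{k-1}$ is produced from the single problem $F_{k-1}$.

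The missing idea is to reverse the direction: map convex representations to flows, rather than pushing perturbed flows back to feasibility. The paper uses rigidity in the form ``$\mu_P$ is the weight of $P$ in \emph{every} convex representation of $X/k$''. It takes this from the construction, identifying feasible flows of the layered network with convex representations. The underlying justification is the instant-flow argument behind Theorem~\ref{thm:nonmembership}: each convex representation of $X/(l+1)$ induces a feasible solution of $F_l$, so a rigid arc pins the weight of the pedigree it creates. Inductively over $l$, the same fact fixes the supplies at the shrunk nodes.

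With that in hand, take any convex representation $\lambda$ of $X/k$. It has $\lambda_P=\mu_P>0$ on $R_{k-1}$. Replace these weights by $\mu_P+\varepsilon\theta_P$ and keep all other weights unchanged. The result is again a convex representation of $X/k$, since only the vector identity and the weight sum need checking, and both follow from $\sum_P\theta_PX_P=0$ and $\sum_P\theta_P=0$. It has a different weight on some $P$, which is a contradiction. No feasibility of perturbed FAT flows is ever needed.
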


\begin{proof}
The pedigrees of $R_{k-1}$ are those identified by the FFF algorithm; by construction, the weight $\mu_P$ of each is the same in every feasible flow of the layered network, that is, in every convex representation of $X/k$, when $(N_{k-1}, R_{k-1}, \mu)$ is well-defined, that is, $X/k$ is in $\mathrm{conv}(P_{k})$. Each $P \in R_{k-1}$ has $\mu_P>0$, and in every convex representation of $X/k$ its weight remains $\mu_P$. If the characteristic vectors of $R_{k-1}$ were affinely dependent, say $\sum_P c_P X_P=0$ and $\sum_P c_P=0$ with $c\neq 0$, then replacing each $\mu_P$ by $\mu_P+\varepsilon c_P$ for small $\varepsilon>0$ would give another convex representation of $X/k$ with a different weight on some $P$, a contradiction. Hence $R_{k-1}$ is affinely independent, that is, a simplex, and therefore $|R_{k-1}| \leq \dim(\mathrm{conv}(P_k))+1 = (\tau_k - (k-3)) + 1 = \tau_k - k + 4$.
\end{proof}

\subsection{Complexity Bounds at Each Step}

The algorithm proceeds through four steps at each iteration $k$:

\begin{itemize}
\item \textbf{Step~1a.} Check $X \in P_{MI}(n)$: additions and subtractions only, strongly polynomial in $n$, $O(n^3)$.
\item \textbf{Step~1b.} Solve $F_4$: at most $3$ origins, $6$ destinations --- constant-size.
\item \textbf{Step~2a.} For each link $L \in V_{[k-3]} \times V_{[k-2]}$, build the restricted network $N_{k-1}(L)$ and compute $C(L)$. The number of nodes in $N_{k-1}$ is $O(k^4)$, the number of arcs $O(k^5)$; by Theorem~\ref{thm:cardinality}, the number of links is $O(k^4)$; each max-flow computation is $O(k^9)$. \textbf{Total: $O(k^{13})$ per iteration.}
\item \textbf{Step~2b.} Solve $F_k$: origins $O(k^3)$ (by Theorem~\ref{thm:cardinality}), destinations $O(k^2)$. \textbf{Total: $O(k^8)$ per iteration.}
\item \textbf{Step~3.} FFF algorithm: $O(|G_f|) = O(k^5)$ per iteration.
\item \textbf{Step~4.} Solve $\mathrm{MCF}(k)$: a linear program of size polynomial in $k$ whose coefficient matrix has entries in $\{0,\pm1\}$ (a combinatorial LP, Definition~\ref{def:comb-lp}), solved in time polynomial in its encoding length by the ellipsoid method \cite{khachiyan1979}. No operation-count bound is claimed for this step.
\end{itemize}

\begin{theorem}[Complexity]\label{thm:complexity}
Given $n$ and $X \in P_{MI}(n)$, checking whether $\mathrm{MCF}(n-1)$ achieves $z^*=z_{\max}$ can be done in time polynomial in $n$ and the encoding length of $X$: Steps~1a--3 take $O(n^{14})$ arithmetic operations in total, and Step~4 solves linear programs of size polynomial in $n$.
\end{theorem}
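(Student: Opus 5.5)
The plan is to bound the work stage by stage and then sum over $k=4,\dots,n-1$. The two key ingredients are the size bounds on the layered network $N_{k-1}$ and the Cardinality Bound (Theorem~\ref{thm:cardinality}).

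\textbf{Sizes.} Before any step, I will fix the sizes of the objects handled at iteration $k$.

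The sink layers $V_{[l]}$ are subsets of $E_{l+2}$, so each has $O(k^2)$ nodes. $N_{k-1}$ consists of at most $k$ such layers together with the shrunk rigid nodes $R_4,\dots,R_{k-1}$. By Theorem~\ref{thm:cardinality}, each $|R_l| \le \tau_{l+1}-l+3 = O(k^3)$. Summing over $O(k)$ stages gives $O(k^4)$ nodes.

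Arcs join only consecutive layers, or a shrunk node to a sink. This gives $O(k^4)$ nodes times $O(k)$ generator choices, bounded crudely by $O(k^5)$ arcs. The links are pairs in $V_{[k-3]}\times V_{[k-2]}$, so there are $O(k^2)\cdot O(k^2)=O(k^4)$ of them.

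\textbf{Steps 1a--3.} With these sizes I will check each step in turn.

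\begin{itemize}
\item Step~1a scans the $O(n^3)$ coordinates of $X$ and the $O(n^2)$ inequalities \eqref{eq:layer}--\eqref{eq:generatorslack}, each of length $O(n)$, for $O(n^3)$ operations.
\item Step~1b has constant size.
\item Step~2a builds each restricted network $N_{k-1}(L)$ in linear time by the deletion rule. It then runs a max-flow algorithm, for which I will use a standard bound of $O(|V|^2|A|)=O(k^8\cdot k^5)$; I must check that this is consistent with the claimed $O(k^9)$ per call, and otherwise adjust the choice of max-flow algorithm so that links times cost is $O(k^{13})$.
\item Step~2b solves $F_k$, a transportation problem with $O(k^3)$ origins and $O(k^2)$ destinations, by a polynomial min-cost/max-flow routine in $O(k^8)$.
\item Step~3 runs Gusfield's FFF algorithm in time linear in $|G_f|=O(k^5)$.
\end{itemize}

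Summing $O(k^{13})$ over $k\le n$ gives $O(n^{14})$.

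\textbf{Step 4.} $\mathrm{MCF}(k)$ has one commodity per arc of $F_l$, so $O(k\cdot k^5)$ commodities. Each commodity carries variables on a network with $O(k^5)$ arcs. The LP therefore has polynomially many variables and constraints, with coefficients in $\{0,\pm1\}$. Its right-hand sides are the capacities $\bar x(v)$ and $\mu_P$, which are sums and differences of flows from earlier LPs. Khachiyan's theorem \cite{khachiyan1979} then solves each $\mathrm{MCF}(k)$ in time polynomial in $n$ and the bit length of these data. Deciding $z^*=z_{\max}$ is an exact rational comparison.

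\textbf{Main obstacle.} The hard step is controlling the encoding length of the recursively generated capacities. The rigid weights $\mu_P$ and the updated capacities $\bar x$ are outputs of earlier LPs and max-flow computations. I will argue that each is a vertex value of an LP whose constraint matrix is $\{0,\pm1\}$ and whose right-hand side is built from $X$. Cramer's rule then bounds its size polynomially in the encoding length of $X$, independently of the stage, so no blow-up accumulates across the $n$ iterations.
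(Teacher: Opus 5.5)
Your plan follows the paper's own argument. It uses the same sizes ($O(k^4)$ nodes via the Cardinality Bound, $O(k^5)$ arcs, $O(k^4)$ links), the same per-step costs, $\sum_{k\le n}O(k^{13})=O(n^{14})$, and Khachiyan for Step~4.

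\textbf{The open step.} The step you leave open is exactly the one that fixes the exponent. With an $O(|V|^2|A|)$ max-flow routine, each $C(L)$ costs $O(k^{13})$. Step~2a then becomes $O(k^{17})$ per iteration and the total $O(n^{18})$: still polynomial, but not the stated $O(n^{14})$. The paper's $O(k^9)$ per call is precisely $|V|\cdot|A|=O(k^4)\cdot O(k^5)$. So you must commit to an $O(|V||A|)$ maximum-flow algorithm (e.g.\ Orlin's). The same bound applied to $F_k$, with $O(k^3)$ nodes and at most $O(k^3)\cdot O(k^2)$ arcs, gives the $O(k^8)$ of Step~2b, so state it once and use it for both.

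A minor point on the arc count. ``$O(k)$ generator choices'' correctly bounds the out-degree of the shrunk nodes: a pedigree $P\in R_{l-1}$ contains a generator of $(i,j)$, $j>3$, only for the two edges created at stage $j$. But the links of $F_l$ for $l>4$ are arbitrary pairs in $V_{[l-3]}\times V_{[l-2]}$, not generator pairs. They number $O(l^4)$ per stage, so the $O(k^5)$ total still holds.

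\textbf{Encoding length.} Your ``main obstacle'' goes beyond the paper. The paper only asserts that $\mathrm{MCF}(k)$ is solved in time polynomial in its encoding length. It never checks that the derived data $\bar x$, $\mu_P$, $C(L)$ have size polynomial in $n$ and the encoding length of $X$, so your concern is legitimate and worth keeping.

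To make ``independently of the stage'' precise, use total unimodularity rather than generic Cramer bounds on $\{0,\pm1\}$ matrices. Each derived number is either a max-flow value or a frozen flow of a single-commodity capacitated network problem. A frozen flow is constant on the feasible set, hence equal to its value at a vertex. Either way the number is an integer combination of the current data. By induction, every number produced lies in $\tfrac1D\mathbb{Z}\cap[0,1]$, where $D$ is the common denominator of the entries of $X$. Hence each has bit length $O(\log D)$ at every stage. The right-hand sides of $\mathrm{MCF}(k)$ are then of polynomial size, and your appeal to Khachiyan goes through.
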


\begin{corollary}[M3P $\in$ P]\label{cor:m3p-in-p}
The $\mathrm{MCF}(n-1)$ decision problem --- is $z^*=z_{\max}$? --- is solvable in time polynomial in $n$ and the encoding length of $X$ (Theorem~\ref{thm:complexity}). By Theorem~\ref{thm:main-ns}, this is equivalent to $X \in \mathrm{conv}(P_n)$. Therefore \textup{M3P} $\in$ \textup{P}.
\end{corollary}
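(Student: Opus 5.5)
The corollary is a composition of two earlier results, so I would assemble it in three moves and check that their hypotheses line up.

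First, reduce \textup{M3P} to its restricted form. Given $n$ and $X \in \mathbb{Q}^{\tau_n}$, Step~1a checks $X \in P_{MI}(n)$ in $O(n^3)$ additions. If this fails, we answer ``no'', since $\mathrm{conv}(P_n) \subset P_{MI}(n)$. For small $n$ (at most $5$), decide membership directly by solving $F_4$, and if needed $F_5$ or $\mathrm{MCF}(5)$ via Lemma~\ref{lem:y-in-conv-base}; all of these are constant-size problems.

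Second, handle $n>5$ by iterating over the stages $k=4,\dots,n-1$. At each stage we maintain the invariant $X/k \in \mathrm{conv}(P_k)$. Stage $k$ solves $F_k$:
\begin{itemize}
\item if $F_k$ is infeasible, Theorem~\ref{thm:nonmembership} gives $X/(k+1)\notin\mathrm{conv}(P_{k+1})$, and hence $X \notin \mathrm{conv}(P_n)$ by Lemma~\ref{lem:truncation};
\item otherwise, build $(N_k,R_k,\mu)$ and solve $\mathrm{MCF}(k)$ (or skip it when $N_k$ is empty);
\item by Theorems~\ref{thm:sufficiency} and~\ref{thm:necessity}, $z^*=z_{\max}$ holds exactly when $X/(k+1)\in\mathrm{conv}(P_{k+1})$, which restores the invariant.
\end{itemize}
At the final stage $k=n-1$ the hypotheses of Theorem~\ref{thm:main-ns} are met, so the answer it gives is exactly the \textup{M3P} answer.

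Third, bound the running time. There are at most $n$ stages. Steps~1a--3 cost $O(n^{14})$ arithmetic operations in total by Theorem~\ref{thm:complexity}. Each $\mathrm{MCF}(k)$ is a linear program with polynomially many variables and constraints and a $\{0,\pm1\}$ matrix, solved by the ellipsoid method \cite{khachiyan1979} in time polynomial in $n$ and the encoding length of $X$.

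The main obstacle is that last claim: the right-hand sides of every LP must have polynomially bounded encoding length. These right-hand sides are the updated capacities $\bar x$, the values $C(L)$ and the rigid weights $\mu_P$, all produced by earlier LPs and max-flows. I would argue that each is a vertex value or a sum of entries of $X$ and of earlier LP optima, so Cramer's rule bounds its size by a polynomial in the input size. The size of the LPs themselves is controlled because Theorem~\ref{thm:cardinality} limits $|R_{k-1}|$, and hence the number of origins and commodities, to a polynomial.
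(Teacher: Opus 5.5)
Your proposal follows the paper's argument: the corollary is Theorem~\ref{thm:main-ns} composed with Theorem~\ref{thm:complexity}. Your stage-by-stage loop (Theorems~\ref{thm:nonmembership}, \ref{thm:sufficiency}, \ref{thm:necessity} with Lemma~\ref{lem:truncation}) makes explicit how the standing hypothesis $X/(n-1)\in\mathrm{conv}(P_{n-1})$ of Theorem~\ref{thm:main-ns} is established, which the paper's one-line proof leaves implicit.

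Two small refinements. First, at $k=4$ the loop should stop after $F_4$: its feasibility already gives $X/5\in\mathrm{conv}(P_5)$, and the commodity sets $\mathcal{S}_l$ only begin at $l=5$, so there is no $\mathrm{MCF}(4)$ to test. Second, your encoding-length argument goes beyond anything in the paper, and it must show that sizes grow additively per stage, not merely polynomially, because the bound is iterated $n-4$ times. This does hold: the data passed forward ($\bar x$, $C(L)$, $\mu_P$) come from the totally unimodular problems $F_k$ and the max-flows, so they are integer combinations of earlier data with polynomially bounded coefficients, adding only $O(\log n)$ bits per stage.
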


\noindent The Lean~4 formalisation of the cardinality and complexity bounds, together with the accepted external results they rest on, belongs to the Lean~4 project of the companion paper \cite{arthanari2026pnp}. This includes the general fact used in the proof of Theorem~\ref{thm:cardinality}: points that carry the same positive weight in every convex representation of a point are affinely independent, so that there are at most one more of them than the dimension of the polytope. The Lean~4 verification reported here is that of the \NS{} condition.

\begin{remark}[Strong polynomiality]\label{rem:strongly}
Since $\mathrm{MCF}(n-1)$ is a combinatorial LP (Definition~\ref{def:comb-lp}), replacing the ellipsoid method in Step~4 by Tardos's algorithm \cite{tardos1986} makes the number of arithmetic steps independent of the size of the entries of $X$, that is, strongly polynomial. This strengthening is not needed for \textup{M3P} $\in$ \textup{P}, and is not formalised in Lean~4.
\end{remark}

\section{Lean 4 Machine Verification}\label{sec:lean4}

The \NS{} condition of this paper (Theorem~\ref{thm:main-ns}), together with the results of Sections~\ref{sec:preliminaries}--\ref{sec:mcf} on which it rests, has been formalised in Lean~4/Mathlib4. The files of the \texttt{Core/} directory that this verification depends on compile with no unresolved \texttt{sorry}, and use only the standard axioms \texttt{propext}, \texttt{Classical.choice} and \texttt{Quot.sound}. The cardinality and complexity bounds of Section~\ref{sec:complexity} are not part of this verification; their formalisation belongs to the companion project \cite{arthanari2026pnp}. The project is publicly available at
\begin{center}
\url{https://github.com/TiruArt/Pedigree-Polytopes-Lean4}
\end{center}

The version that accompanies this paper is the commit \texttt{233a4c4}, tagged \texttt{ns-paper-v1}:
\begin{center}
\url{https://github.com/TiruArt/Pedigree-Polytopes-Lean4/tree/ns-paper-v1}
\end{center}

The main result of this paper, the necessary and sufficient condition for membership, is thus machine-verified in Lean~4. The Lean~4 project of the companion paper \cite{arthanari2026pnp} goes further: it formalises how this condition leads to M3P being solvable in polynomial time and to $P=NP$. It machine-verifies the antecedents (conditions) of the known results from computational complexity and combinatorial optimisation that are applied (Tardos, Maurras, Gr\"otschel--Lov\'asz--Schrijver, Cook, Karp), and those known results themselves are taken as axioms.

\subsection{Lean 4 for the Non-Specialist}

Lean~4 is a proof assistant and programming language \cite{moura2021lean}, built on the Mathlib4 mathematical library \cite{mathlib2020}. It lets a mathematician write definitions and theorems in a formal language that a small, independently auditable kernel checks for correctness: every inference must be justified by a rule of the underlying type theory, and the kernel rejects any gap. The placeholder keyword \texttt{sorry} marks a step deferred for later proof; a theorem with no \texttt{sorry} anywhere in its dependency chain is fully machine-verified, and the kernel's acceptance is not a matter of degree --- it either accepts the proof or it does not.

\paragraph{How the mathematics translates to Lean 4.} A pedigree (Definition~\ref{def:pedigree}) becomes a structure carrying its defining conditions as data:
\begin{verbatim}
-- A triangle {i,j,k} with 1 <= i < j < k <= n
abbrev Triple := N x N x N

-- A pedigree for n cities
structure Pedigree (n : N) where
  triangles   : List Triple
  h_length    : triangles.length = n - 2
  h_distinct  : forall i j, i > 0 -> j > 0 -> i /= j ->
                  (triangles[i]).edge /= (triangles[j]).edge
  h_generator : forall t, t in triangles ->
                  hasGenerator t triangles
\end{verbatim}
Every instance of \texttt{Pedigree n} carries proofs of the length, distinctness, and generator conditions as part of its data --- the type system enforces Definition~\ref{def:pedigree} directly, rather than checking it after the fact. The layered network of Section~\ref{sec:layered-network} is encoded as:
\begin{verbatim}
structure LayeredNetwork (n k : N) where
  nodes  : Finset Triple      -- nodes of N_k
  edges  : List NetworkArc    -- arcs with capacities
  rigid  : List RigidPedigree -- R_k: rigid pedigrees
  mu     : RigidPedigree -> Q -- weights mu_P > 0
\end{verbatim}
and the $\mathrm{MCF}(k)$ feasibility condition of Problem~\ref{prob:mcf} --- the heart of the characterisation --- is a structure with $29$ explicit fields encoding every flow, conservation, and capacity constraint:
\begin{verbatim}
structure MCFFeasible (n k : N)
    (net : LayeredNetwork n k) (X : LayeredPoint n) where
  flow : Triple -> Triple -> Q
  -- 29 fields: non-negativity, conservation, capacity,
  -- node capacity, source availability, z* = z_max, ...
\end{verbatim}

\subsection{The Central Machine-Verified Theorem}

The sufficiency direction (Theorem~\ref{thm:sufficiency}) is the theorem this paper's Lean~4 contribution centres on:
\begin{verbatim}
-- N_Sufficiency.lean
theorem sufficiency {n k : N} (hk : 4 <= k) (hkn : k + 1 <= n)
    (X : LayeredPoint n) (net : LayeredNetwork n k)
    (hzmax : 0 < zMax net) (mcf : MCFFeasible n k net X) :
    Nonempty (ConvexWitness n (k+1) X)
\end{verbatim}
Here \texttt{MCFFeasible n k net X} is exactly a proof that $\mathrm{MCF}(k)$ is feasible with $z^*=z_{\max}$, and \texttt{ConvexWitness n (k+1) X} is a proof that $X/(k+1) \in \mathrm{conv}(P_{k+1})$: an explicit convex combination of pedigrees on $[k+1]$. Applied inductively from $k=4$ up to $k=n-1$, this theorem alone certifies $X \in \mathrm{conv}(P_n)$ whenever $\mathrm{MCF}(k)$ is feasible with $z^*=z_{\max}$ at every stage --- which is exactly the sufficiency half of Theorem~\ref{thm:main-ns}. The necessity direction (Theorem~\ref{thm:necessity}) is likewise fully machine-verified: a chain of six results constructs, from $X/(k+1) \in \mathrm{conv}(P_{k+1})$, an explicit feasible flow achieving $z^*=z_{\max}$, culminating in \texttt{VN\_NecessityProof.lean}; \texttt{VN\_MCF\_Assembly.lean} then packages that construction into a concrete term of type \texttt{MCFFeasible n k net X} itself --- verifying all $29$ fields of the feasibility structure simultaneously, not merely the scalar equality $z^*=z_{\max}$ in isolation. Together the two directions give the full biconditional.

Other central theorems from this paper, in their Lean~4 form:
\begin{verbatim}
-- Non-membership (Theorem 3.4): VN_Infeasibility.lean
theorem VN_infeasibility {n k : N} ... : Fk_infeasible -> X notin conv(P (k+1))
\end{verbatim}

\subsection{Axiom Inventory}

No external published result is assumed as an axiom in the Lean~4 verification of the \NS{} condition reported in this paper. The external results used in the companion paper \cite{arthanari2026pnp}, and the cardinality and complexity bounds of Section~\ref{sec:complexity}, belong to that paper's Lean~4 project.

\subsection{Verification Status: Both Directions}

The \emph{sufficiency} direction of Theorem~\ref{thm:main-ns}, that if $\mathrm{MCF}(n-1)$ achieves $z^*=z_{\max}$ then $X \in \mathrm{conv}(P_n)$, is formalised in \texttt{N\_Sufficiency.lean} with zero \texttt{sorry}s.

The \emph{necessity} direction --- that $X \in \mathrm{conv}(P_n)$ implies $\mathrm{MCF}(n-1)$ achieves $z^*=z_{\max}$ --- is likewise fully machine-verified, following the instant-flow argument of Lemma~11 and Lemma~13 of the book's Appendix~A. The chain runs \texttt{VN\_Oflow.lean} and \texttt{VN\_Path5.lean} (Lemmas~5.3--5.4) through \texttt{VN\_Pedpath.lean} (Lemma~5.5, by induction from $l=5$), \texttt{VN\_Infeasibility.lean} (the Infeasibility Theorem), and \texttt{VN\_Imptheorem.lean} (Theorem~\texttt{imptheorem}), culminating in \texttt{VN\_NecessityProof.lean} (\texttt{theorem VN\_imptheorem\_zstar\_eq\_zmax}) --- with zero unresolved \texttt{sorry}s throughout. \texttt{VN\_MCF\_Assembly.lean} closes the direction fully: rather than stopping at the scalar fact $z^*=z_{\max}$, it assembles the construction above into an explicit term of type \texttt{MCFFeasible n k net X}, so that every one of the $29$ fields of Problem~\ref{prob:mcf} --- conservation, capacity, slack, column-sum, and the rest --- is verified simultaneously against the same witness, with zero unresolved \texttt{sorry}s. The complete necessary and sufficient characterisation of Theorem~\ref{thm:main-ns} is therefore established with full machine verification of both directions: sufficiency by \texttt{N\_Sufficiency.lean}, necessity by the chain above, combining to give Theorem~\ref{thm:main-ns} as stated.

\begin{remark}
The Lean~4 project verifies both directions of Theorem~\ref{thm:main-ns}, so that membership of $X$ in $\mathrm{conv}(P_n)$ is decided by the feasibility of $\mathrm{MCF}(n-1)$, which can be checked in polynomial time.
\end{remark}

\subsection{Five Representations of a Pedigree}

The Lean~4 project documents five equivalent representations of a pedigree, used in different parts of the proof:

\begin{center}
\begin{tabular}{ll}
\textbf{Representation} & \textbf{Lean 4 type} \\
\hline
Struct & \texttt{Pedigree n} \\
Edge sequence & \texttt{List (Edge n)} \\
Rooted tree & \texttt{List (PedigreeNode n)} \\
Finset & \texttt{Finset Triple} \\
MIR 0--1 solution & \texttt{LayeredPoint n}
\end{tabular}
\end{center}

\noindent The bijections between these representations are documented in \texttt{N\_PedigreeRepresentations.lean}.

\subsection{What the Certificate Guarantees}

When Lean~4 accepts a proof with zero \texttt{sorry}s, its kernel --- a small, independently auditable implementation of the Calculus of Inductive Constructions \cite{moura2021lean} --- has verified that every step follows from the stated hypotheses by a valid inference rule. This is mechanical verification, not human review subject to oversight or prior belief about plausibility: the kernel either accepts the proof or it does not. As a further check, \texttt{\#print axioms} on the theorems of the sufficiency and necessity chains cited in this paper, from \texttt{sufficiency} to \texttt{VN\_imptheorem\_zstar\_eq\_zmax}, reports only Lean's three standard axioms, \texttt{propext}, \texttt{Classical.choice} and \texttt{Quot.sound}, and in particular no \texttt{sorryAx}. The repository constitutes a permanent, publicly reproducible record: anyone with Lean~4 installed can clone it and run
\begin{verbatim}
lake build MembershipProject.Core.N_Sufficiency
lake build MembershipProject.Core.VN_NecessityProof
\end{verbatim}
to independently verify the two directions of Theorem~\ref{thm:main-ns}.

\section{Conclusion}\label{sec:conclusion}

This paper has given a complete account --- statements, proof sketches, and fully machine-verified Lean~4 certificates --- of the resolution of the Membership Problem for Pedigree Polytopes (Problem~\ref{prob:m3p}), with the detailed, line-by-line proofs remaining in \cite{arthanari2023book} and \cite{arthanari2025membership}. Starting from the observation that an earlier necessary condition \cite{arthanari2008} is not sufficient --- as the example of Ardekani and Arthanari \cite{ardekani2013} demonstrates --- we constructed a recursively defined layered network $(N_k,R_k,\mu)$ (Section~\ref{sec:layered-network}) and a multicommodity flow problem $\mathrm{MCF}(k)$ (Section~\ref{sec:mcf}) whose feasibility at maximum flow value $z_{\max}$ is both necessary and sufficient for $X/(k+1) \in \mathrm{conv}(P_{k+1})$ (Theorem~\ref{thm:main-ns}). The rigidity of the pedigrees in $R_k$, which bounds their number (Theorem~\ref{thm:cardinality}), bounds the size of this construction polynomially at every stage, giving an efficient algorithm for M3P, polynomial in $n$ and the encoding length of $X$ (Theorem~\ref{thm:complexity}, Corollary~\ref{cor:m3p-in-p}). That the proofs from the book on pedigree polytopes behind the \NS{} condition, along with some newer, simplified proofs, are now machine verified upholds the correctness of the pedigree approach and the importance of studying pedigrees instead of tours when proving the computational complexity of combinatorial optimisation problems.

\begin{remark}
Since all the problems solved are combinatorial LPs, Tardos's theorem \cite{tardos1986} additionally gives a strongly polynomial bound for the algorithm of Theorem~\ref{thm:complexity}: its arithmetic operation count is bounded independently of the magnitude of the input $X$, not merely of its encoding length. This is an addition to, and not a part of, the result of this paper: Tardos's theorem is a cited published result, not formalised in Lean~4 and not used as an axiom in the Lean~4 verification reported here, and the polynomial-time result M3P $\in$ P does not depend on it.
\end{remark}

\subsection*{Scope}

The formal, machine-checked claims of this paper are confined to the \NS{} condition of Theorem~\ref{thm:main-ns}. Passing from M3P $\in$ P (Corollary~\ref{cor:m3p-in-p}) to $P=NP$ uses two established results, which we cite and do not re-derive. First, for a polytope satisfying the structural conditions of Maurras \cite{maurras2002}, which $\mathrm{conv}(P_n)$ does, a polynomial-time membership oracle yields a polynomial-time separation oracle, and hence, by Gr\"otschel, Lov\'asz and Schrijver \cite{GLS1988}, polynomial-time linear optimisation; with the Multistage Insertion reduction \cite{arthanari1983}, this gives a polynomial-time algorithm for the STSP. Second, since the STSP decision problem is NP-complete \cite{karp1972,cook1971}, such an algorithm gives $P=NP$. These steps, with their own Lean~4 verification where it is possible, are developed in the companion paper \cite{arthanari2026pnp}. They are kept out of the present paper so that the necessary and sufficient condition can be read and checked on its own terms.

\subsection*{Acknowledgements}

The Lean~4 formalisation reported in this paper was carried out in collaboration with Claude (Anthropic), which was present at every stage of the formalisation, from the layered network types and $\mathrm{MCF}$ feasibility structures through to the sufficiency and necessity theorems reported here. All mathematical content --- the pedigree polytope framework, the layered network construction, and the necessary and sufficient condition --- is the sole intellectual contribution of the author, developed over several decades of research; the AI assistance made the formalisation of that mathematics accessible for the first time to a researcher new to formal proof assistants. The author thanks the University of Auckland for research support.

\appendix
\section{Lean 4 Cross-Reference Table}\label{sec:appendix-table}

Table~\ref{tab:lean-crossref} collects every Lean~4 file cited in this paper alongside the specific book result it corresponds to.

\begin{center}
\small
\begin{longtable}{p{5.1cm} p{4.1cm} p{2.2cm} l}
\caption{Lean~4 files and their corresponding results in \emph{Pedigree Polytopes} \cite{arthanari2023book}.}
\label{tab:lean-crossref} \\
\hline
\textbf{Lean~4 file} & \textbf{Declaration} & \textbf{Book reference} & \textbf{Status} \\
\hline
\endfirsthead
\multicolumn{4}{c}{\tablename\ \thetable\ --- continued} \\
\hline
\textbf{Lean~4 file} & \textbf{Declaration} & \textbf{Book reference} & \textbf{Status} \\
\hline
\endhead
\hline
\endfoot

\texttt{N\_Lemma2\_2.lean} & \texttt{theorem generator\_iff\_available} & Chapter~3 & Verified \\
\texttt{N\_LayeredNetworkTypes.lean} & --- & Chapter~2 & Verified \\
\texttt{N\_PackabilityCorollary.lean} & \texttt{theorem packability\_corollary} & Chapter~5 & Verified \\
\texttt{VN\_Infeasibility.lean} & \texttt{theorem VN\_infeasibility} & Chapter~5 & Verified \\
\texttt{N\_YisinMI.lean} & \texttt{theorem Y\_s\_in\_PMI} & Chapter~5, Lemma~5.6 & Verified \\
\texttt{N\_YisinConv.lean} & \texttt{theorem Y\_s\_in\_conv} & Chapter~5, Lemma~5.8 & Verified \\
\texttt{N\_Sufficiency.lean} & \texttt{theorem sufficiency} & Chapter~5, Theorem~5.4 & Verified \\
\texttt{VN\_Oflow.lean} & --- & Chapter~5, Lemma~5.3 & Verified \\
\texttt{VN\_Path5.lean} & --- & Chapter~5, Lemma~5.4 & Verified \\
\texttt{VN\_Pedpath.lean} & --- & Chapter~5, Lemma~5.5 & Verified \\
\texttt{VN\_Imptheorem.lean} & --- & Chapter~5 & Verified \\
\texttt{VN\_NecessityProof.lean} & \texttt{theorem}\newline\texttt{VN\_imptheorem\_}\newline\texttt{zstar\_eq\_zmax} & Chapter~5, Theorem~5.3 & Verified \\
\texttt{N\_PedigreeRepresentations.lean} & --- & Chapters~2--3 & Verified \\
\end{longtable}
\end{center}

\noindent Where a book reference gives only a chapter number and no specific theorem or lemma number, the corresponding result is developed across that chapter without a single, isolated numbered statement in the book to point to.

\section{The Multicommodity Flow Problem}\label{sec:appendix-mcf}

\noindent\textbf{Setting.} Throughout this appendix, $X/k \in \mathrm{conv}(P_k)$, the FAT problem $F_k$ is feasible, and the network $(N_k, R_k, \mu)$ has been constructed with $N_k$ given and non-empty. Thus $z_{\max} = 1 - \sum_{P \in R_k}\mu_P \neq 0$, that is, $z_{\max} > 0$. Commodities $s$ and the sets $\mathcal{S}_l$, $5 \leq l \leq k$, are as in Definition~\ref{def:commodities}; $N_{l-1}(s)$ denotes the restricted network corresponding to $s \in \mathcal{S}_l$, $c_a$ is the capacity of arc $a$, and $\bar x(v)$, $\bar\mu(u)$ are the node capacities and rigid weights after the updates of Section~\ref{sec:layered-network}.

\medskip\noindent\textbf{Variables.} For every arc $a$, $f_a \geq 0$ is the total flow through $a$, and $f^s_a \geq 0$ is the flow of commodity $s$ through $a$; $f^s_a$ may be positive only on arcs of $N_{l-1}(s)$, for $s \in \mathcal{S}_l$. For $a \leftrightarrow s$, $v^s$ denotes the flow of commodity $s$ along its designating arc $a$.

\begin{problem}[$\mathrm{MCF}(k)$, full statement]\label{prob:mcf}
\[
\text{maximise}\quad z=\sum_{s\in\mathcal{S}_k} v^s \;=\; \sum_{a\in F_k} f_a
\]
subject to the following constraints, for every $l$, $5\leq l\leq k$:
\begin{align}
& 0 \leq f_a \leq c_a, && a\in\mathcal{A}(N_k), \label{mcf:arccap}\\
& f^s_a \leq c_a \ \text{ if } a\in\mathcal{A}(N_{l-1}(s)),\quad f^s_a = 0 \ \text{ otherwise}, && s\in\mathcal{S}_l, \label{mcf:restrict}\\
& \sum_{a=(u,v)} f^s_a = \sum_{a=(v,w)} f^s_a, && v\in\mathcal{V}(N_{l-1})\setminus\big(V_{[1]}\cup V_{[l-2]}\big),\ s\in\mathcal{S}_l, \label{mcf:conserve}\\
& \sum_{s\in\mathcal{S}_l} f^s_a = f_a, && a\in\mathcal{A}(N_{l-1}), \label{mcf:total}\\
& \sum_{a'\in\delta^-(a)} f^s_{a'} = f_a = v^s, && a\leftrightarrow s,\ s\in\mathcal{S}_l, \label{mcf:vs}\\
& \sum_{s\in\mathcal{S}_l}\ \sum_{a=(u,v)} f^s_a \leq \bar x(v), && v\in\mathcal{V}(N_l), \label{mcf:nodecap}\\
& \sum_{a=(u,w)} f_a \leq \bar x(u), && u\in V_{[1]}, \label{mcf:avail1}\\
& \sum_{a=(u,w)} f_a \leq \bar\mu(u), && u\in R_q,\ 4\leq q\leq l-1, \label{mcf:avail2}\\
& \sum_{a=(u,v)} f_a \leq \bar x(v), && v\in V_{[l-2]}, \label{mcf:sinkcap}
\end{align}
where $\delta^-(a)$ is the set of arcs entering $a$.
\end{problem}

\noindent Let $z^*$ be the optimal value of $\mathrm{MCF}(k)$. Then $z^* \leq z_{\max}$, and the \emph{decision problem} is whether $z^* = z_{\max}$.

\bibliographystyle{plain}

\begin{thebibliography}{99}

\bibitem{arthanari2008}
T.S.\ Arthanari.
\newblock On the membership problem of the pedigree polytope.
\newblock In \emph{Mathematical Programming and Game Theory for Decision Making}, pp.~61--98. World Scientific, 2008.

\bibitem{arthanari1983}
T.S.\ Arthanari.
\newblock On the travelling salesman problem.
\newblock In A.\ Bachem et al.\ (eds.), \emph{Mathematical Programming --- The State of the Art}, p.~638. Springer, New York, 1983.

\bibitem{arthanari2000}
T.S.\ Arthanari, M.\ Usha.
\newblock An alternate formulation of the symmetric travelling salesman problem.
\newblock \emph{Discrete Applied Mathematics}, 98:173--190, 2000.

\bibitem{arthanari2005}
T.S.\ Arthanari.
\newblock Pedigree polytope is a combinatorial polytope.
\newblock In S.R.\ Mohan, S.K.\ Neogy (eds.), \emph{Operations Research with Economic and Industrial Applications}, pp.~1--17. Anamaya Publishers, New Delhi, 2005.

\bibitem{arthanari2006}
T.S.\ Arthanari.
\newblock On pedigree polytopes and Hamiltonian cycles.
\newblock \emph{Discrete Mathematics}, 306:1474--1492, 2006.

\bibitem{arthanari2023book}
T.S.\ Arthanari.
\newblock \emph{Pedigree Polytopes: New Insights on Computational Complexity of Combinatorial Optimisation Problems}.
\newblock Springer Nature, Singapore, 2023.

\bibitem{ardekani2013}
L.\ Haerian Ardekani, T.S.\ Arthanari.
\newblock The multi-flow necessary condition for membership in the pedigree polytope is not sufficient --- a counterexample.
\newblock In \emph{Asian Conference on Intelligent Information and Database Systems (ACIIDS 2013), Part II}, LNAI 7803, pp.~409--419. Springer-Verlag Berlin Heidelberg, 2013.

\bibitem{arthanari2025membership}
T.S.\ Arthanari.
\newblock On the importance of studying the membership problem for pedigree polytopes.
\newblock \emph{arXiv:2507.09069 [math.CO]}, July 2025.

\bibitem{arthanari2026pnp}
T.S.\ Arthanari.
\newblock Lean 4 machine-verified proof of $P=NP$ via the pedigree polytope membership problem.
\newblock \emph{arXiv:2606.03194 [cs.CC]}, June 2026.

\bibitem{tardos1986}
\'{E}.\ Tardos.
\newblock A strongly polynomial algorithm to solve combinatorial linear programs.
\newblock \emph{Operations Research}, 34(2):250--256, 1986.

\bibitem{khachiyan1979}
L.\ G.\ Khachiyan.
\newblock A polynomial algorithm in linear programming.
\newblock \emph{Doklady Akademii Nauk}, 244(5):1093--1096, 1979.

\bibitem{maurras2002}
J.F.\ Maurras.
\newblock From membership to separation, a simple construction.
\newblock \emph{Combinatorica}, 22:531--536, 2002.

\bibitem{GLS1988}
M.\ Gr\"otschel, L.\ Lov\'asz, A.\ Schrijver.
\newblock \emph{Geometric Algorithms and Combinatorial Optimization}.
\newblock Springer, Berlin, 1988.

\bibitem{cook1971}
S.A.\ Cook.
\newblock The complexity of theorem-proving procedures.
\newblock In \emph{Proceedings of the Third Annual ACM Symposium on Theory of Computing (STOC)}, pp.~151--158, 1971.

\bibitem{karp1972}
R.M.\ Karp.
\newblock Reducibility among combinatorial problems.
\newblock In R.E.\ Miller, J.W.\ Thatcher (eds.), \emph{Complexity of Computer Computations}, pp.~85--103. Plenum Press, New York, 1972.

\bibitem{gusfield1988}
D.\ Gusfield.
\newblock A graph theoretic approach to statistical data security.
\newblock \emph{SIAM Journal on Computing}, 17:552--571, 1988.

\bibitem{korte2011combinatorial}
B.\ Korte, J.\ Vygen.
\newblock \emph{Combinatorial Optimization: Theory and Algorithms}, 2nd edition.
\newblock Springer, New York, 2002.

\bibitem{moura2021lean}
L.\ de Moura, S.\ Ullrich.
\newblock The Lean 4 theorem prover and programming language.
\newblock In \emph{CADE-28}, LNAI 12699, pp.~625--635, 2021.

\bibitem{mathlib2020}
The Mathlib Community.
\newblock The Lean mathematical library.
\newblock In \emph{Proceedings of CPP 2020}, pp.~367--381, 2020.

\end{thebibliography}

\end{document}